\newcommand{\N}{\mathbb{N}}
\newtheorem{thm}{Theorem}[section]
\newtheorem{lem}[thm]{Lemma}
\newtheorem{cor}[thm]{Corollary}
\newtheorem{que}[thm]{Question}
\DeclareMathOperator{\CAT}{CAT}
\DeclareMathOperator{\id}{id}
\DeclareMathOperator{\im}{im}
\newcommand{\Geo}[2]{[#2, #1)}
\DeclareMathOperator{\Pow}{Pow}
\theoremstyle{definition}
\newtheorem{defn}[thm]{Definition}
\begin{document}
\title[Boundary actions of hyperbolic groups]{Hyperfiniteness of boundary actions of cubulated
  hyperbolic groups}
\author{Jingyin Huang}
\address{Jingyin Huang, McGill University, Department of Mathematics and
  Statistics, 805 Sherbrooke Street W, Montreal, QC, H3A 0B9
Canada} 
\email{jingyin.huang@mail.mcgill.ca}
\author{Marcin Sabok}
\address{Marcin Sabok, McGill University, Department of Mathematics and
  Statistics, 805 Sherbrooke Street W, Montreal, QC, H3A 0B9
Canada and Instytut Matematyczny PAN, \'{S}niadeckich 8,
00-656 Warszawa, Poland}
\email{marcin.sabok@mcgill.ca}
\author{Forte Shinko}
\address{Forte Shinko, McGill University, Department of Mathematics and
  Statistics, 805 Sherbrooke Street W, Montreal, QC, H3A 0B9
Canada}
\email{forte.shinko@mail.mcgill.ca}
\thanks{The authors would like to acknowledge support from
  the NCN (Polish National Science Centre) through the grant
  \textit{Harmonia} no. 2015/18/M/ST1/00050. Marcin Sabok
  acknowledges also support from 
  NSERC through the \textit{Discovery Grant} RGPIN-2015-03738}

\maketitle
\begin{abstract}
  We show that if a hyperbolic group acts geometrically on a
  CAT(0) cube complex, then the induced boundary action is
  hyperfinite. This means that for a cubulated
  hyperbolic group the natural action on its Gromov boundary
  is hyperfinite, which generalizes an old result of
  Dougherty, Jackson and Kechris for the free group case.
\end{abstract}

\section{Introduction}

The complexity theory for countable Borel equivalence
relations has been an active topic of study over the last
few decades. By the classical result of Feldman and Moore \cite{feldman.moore},
countable Borel equivalence relations correspond to Borel
actions of countable groups and there has been a lot of
effort to understand how the structure of the actions of a group
depends on the group itself.

Recall that if $Z$ is a standard Borel space,
then a \textit{Borel equivalence relation} on $Z$
is an equivalence relation $E\subseteq Z^2$ which is Borel in $Z^2$.
If $E$ and $F$ are Borel equivalence relations on $Z$ and $Y$ respectively,
we say that $E$ is \textit{Borel-reducible} to $F$ (denoted $E\le_B F$)
if there is a Borel function $f:Z\to Y$ such that
$z_1\mathrel{E}z_2$ if and only if  $f(z_1)\mathrel{F}
f(z_2)$ for all $z_1,z_2\in Z$
($f$ is then called a \textit{reduction} from $E$ to $F$).
A \textit{smooth} equivalence relation
is a Borel equivalence relation which is reducible to $\id_{2^\N}$,
the equality relation on the Cantor set. The relation $E_0$
is defined on the Cantor set $2^\N$ as follows:
$x\mathrel{E_0}y$ if there exists $n$ such that $x(m)=y(m)$
for all $m>n$.
A \textit{finite} (resp. \textit{countable}) equivalence relation
is an equivalence relation whose classes are finite (resp. countable).
An equivalence relation $E$ on $X$ is \textit{hyperfinite}
(resp. \textit{hypersmooth})
if there is a sequence $F_n$ of finite (resp. smooth) equivalence relations on $X$
such that $F_n\subseteq F_{n+1}$ and $E = \bigcup_n F_n$.
Note that if $E\le_B F$ and $F$ is hypersmooth,
then $E$ is also hypersmooth. 

 Among countable
equivalence relations, hyperfinite equivalence relations are
exactly those which are Borel-reducible to $E_0$ \cite{djk}. The
classical dichotomy of Harrington, Kechris and Louveau \cite{hkl}
implies that if a countable Borel equivalence relation is
not smooth, then $E_0$ is Borel-reducible to it. Interestingly, a very recent result of Conley and
Miller \cite{conley.miller} implies that among countable Borel equivalence
relations which are not hyperfinite there is no countable basis
with respect to Borel-reducibility.

Hyperfinite equivalence relations have particular structure,
observed by Slaman and Steel and independently by Weiss
(see \cite[Theorem 7.2.4]{gao}). An equivalence relation $E$ on $Z$ is hyperfinite if and
only if there exists a Borel action of the group of integers
$\mathbb{Z}$ on $Z$ which induces $E$ as its orbit
equivalence relation. In recent years, there has been a lot
of effort to understand which groups induce hyperfinite
equivalence relations. For instance, Gao and Jackson \cite{gj}
showed that Borel actions of all Abelian groups induce
hyperfinite equivalence relations. It is still unknown if
all Borel actions of amenable groups induce hyperfinite
equivalence relations.

In this paper we are mainly interested in actions of
hyperbolic groups. Recall that a geodesic metric space $X$
is \textit{hyperbolic} if there exists $\delta>0$ such that
all geodesic triangles in $X$ are \textit{$\delta$-thin},
i.e. each of their sides is contained in the
$\delta$-neighborhood of the union of the other two
sides. In such case we also say that $X$ is
\textit{$\delta$-hyperbolic}.  A finitely generated group
$G$ is \textit{hyperbolic} if its Cayley graph (w.r.t. an
arbitrary finite generating set) is hyperbolic. An isometric
action of a group $G$ on a metric space $X$ is
\textit{proper} if for every compact subset $K\subseteq X$
the set $\{g\in G: gK\cap K\neq \emptyset \}$ is finite.  An
isometric action of $G$ on $X$ is \textit{cocompact} if
there exists a compact subset $A$ of $X$ such that $GA =
X$.
If $X$ is a combinatorial complex, then an isometric action
of a group on $X$ is proper if and only if the stabilizers
of all vertices are finite. Similarly, an action on a
combinatorial complex is cocompact if and only if there are
finitely many orbits of vertices. An action of a group is
called \textit{geometric} if it is both proper and
cocompact. If a group $G$ acts geometrically on a geodesic
metric space $X$ by isometries, then $G$ is hyperbolic if
and only if $X$ is hyperbolic, since hyperbolicity is
invariant under quasi-isometries.

Given a geodesic hyperbolic space $X$ we denote by $\partial X$ its
\textit{Gromov boundary} (for definition see e.g. \cite{kapovich}). Any geometric
action of a hyperbolic group $G$ on a hyperbolic space $X$ induces a
natural action of $G$ on $\partial X$ by homeomorphisms.
If $X$ is the Cayley graph of a hyperbolic group $G$, then
the Gromov boudary of $X$ is called the \textit{Gromov
  boundary of the group} $G$. 

Hyperbolic groups often admit geometric actions on CAT$(0)$
cube complexes. Recall that a \emph{cube complex} is obtained by taking a disjoint 
collection of unit cubes in Euclidean spaces of various 
dimensions, and gluing them isometrically along their 
faces. A geodesic metric space $X$ is
\textit{CAT$(0)$} if for every geodesic triangle $\Delta$ in $X$ and
a comparison triangle $\Delta'$ in the Euclidean plane, with
sides of the same length as the sides of $\Delta$, the
distances between points on $\Delta$ are less than or equal
to the distances between the corresponding points on
$\Delta'$. This is one way of saying that a metric space has
nonpositive curvature. For more details on CAT$(0)$ cube complexes see
Section \ref{sec:cube-compl}.

If a hyperbolic group admits a geometric action on a
CAT$(0)$ cube complex, then we say that it is
\textit{cubulated}. Examples of cubulated hyperbolic groups
include 
\begin{itemize}
\item fundamental groups of hyperbolic surfaces and
hyperbolic closed 3-manifolds (Kahn and Markovic
\cite{kahn.markovic} and Bergeron and Wise
\cite{bergeron.wise}),
\item uniform hyperbolic lattices of
"simple type" (Haglund and Wise \cite{haglund.wise}), 
\item hyperbolic Coxeter groups (Niblo and Reeves
\cite{niblo.reeves} and Caprace and M\"uhlherr
\cite{caprace.muhlherr}),
\item $C'(1/6)$ or $C'(1/4)$-$T(4)$
metric small cancellation groups (Wise \cite{wise.cub}),
\item certain cubical small cancellation groups (Wise
\cite{wise}),
\item Gromov's random groups with density $<1/6$
(Ollivier and Wise \cite{ollivier.wise}),
\item hyperbolic
free-by-cyclic groups (Hagen and Wise \cite{hagen.wise.irr}
in the irreducible case and \cite{hagen.wise.gen} in the
general case).
\end{itemize}
It is worth noting that cubulations of
hyperbolic groups played important role in recent
breakthroughs on the Virtual Haken Conjecture by Agol
\cite{agol} and Wise \cite{wise}. The main result of this paper is the
following.

\begin{thm}\label{main}
   If a hyperbolic group $G$ acts geometrically on a
   $\mathrm{CAT}(0)$ cube complex $X$, then the induced action on
   $\partial X$ is hyperfinite.
\end{thm}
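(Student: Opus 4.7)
The plan is to reduce the hyperfiniteness of the boundary action to a combinatorially tractable equivalence relation on the space of hyperplane orientations, exploiting the \CAT(0) cube complex structure.

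Let $\h$ be the set of hyperplanes of $X$. Since $G$ acts geometrically, $X$ is locally finite, so $\h$ is countable and $G$ acts on $\h$ with finitely many orbits. Fix a basepoint $x_0\in X^{(0)}$. For each $\xi\in\partial X$, I would define an orientation $\iota(\xi)\in 2^{\h}$ by declaring $\iota(\xi)(H)$ to be the halfspace of $H$ containing the tail of any combinatorial geodesic ray from $x_0$ to $\xi$. Hyperbolicity of $X$ is what makes this work: any two combinatorial rays to the same boundary point fellow-travel at bounded distance and therefore eventually lie on the same side of every hyperplane, so $\iota$ is well-defined, Borel, and injective into $2^{\h}$.

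The next step is to understand how $\iota$ interacts with the $G$-action. Under the natural shift action $(g\cdot O)(H)=g\bigl(O(g^{-1}H)\bigr)$ of $G$ on $2^{\h}$, a basepoint-change argument shows that $\iota(g\xi)$ and $g\cdot\iota(\xi)$ differ only on the finite set of hyperplanes separating $x_0$ from $gx_0$. Consequently, $E_G$ embeds (via $\iota$) into the orbit equivalence of the Borel action of $\Ga:=\bigl(\bigoplus_{\h}\Z/2\Z\bigr)\rtimes G$ on $2^{\h}$, where the normal subgroup acts by finitely supported coordinate flips and $G$ acts by the hyperplane shift.

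The heart of the argument is then to exhibit a hyperfinite Borel equivalence relation on $\iota(\partial X)$ containing $E_G$; since Borel subrelations of hyperfinite relations are hyperfinite (one intersects the increasing witnesses), this suffices. My approach would be to Borel-select a canonical combinatorial geodesic ray $\gamma_\xi$ from $x_0$ to each $\xi$ and code $\xi$ by the sequence in the finite alphabet $\h/G$ of $G$-orbits of hyperplanes crossed by $\gamma_\xi$. Cocompactness ensures the alphabet is finite, and hyperbolicity should ensure, via fellow-traveling, that the sequences attached to $\xi$ and $g\xi$ agree after deletion of finite prefixes. This would translate the orbit equivalence into tail equivalence on $(\h/G)^{\N}$, which is hyperfinite as the orbit equivalence of the shift map.

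The main obstacle is making this Borel coding compatible with the group action. In the free-group case of Dougherty--Jackson--Kechris the canonical ray is just the reduced infinite word and orbit equivalence becomes tail equivalence on reduced words tautologically. For a general cubulated hyperbolic group, combinatorial geodesics are non-unique, and orbit equivalence on sequences in $\h/G$ may fail to coincide exactly with tail equivalence. One must carefully calibrate the selection of canonical rays, or weaken the coding and control the fibers, so that the resulting Borel equivalence on $\partial X$ still admits a hyperfinite exhaustion. Setting up the canonical rays in a form amenable to fellow-traveling arguments in the hyperbolic cube complex, and arranging that the induced equivalence absorbs both the finite-flip and the $G$-shift contributions of $\Ga$, is where the interaction between hyperbolicity and cubulation is most delicate and constitutes the core technical difficulty.
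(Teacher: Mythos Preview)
Your setup via the Roller-type map $\iota:\partial X\to 2^{\h}$ is sound, and the observation that $\iota(g\xi)$ and $g\cdot\iota(\xi)$ differ only on the hyperplanes separating $x_0$ from $gx_0$ is correct. But the step you flag as ``the core technical difficulty'' is a genuine gap, not just a detail to be filled in. Coding $\xi$ by the $\h/G$-sequence of hyperplanes crossed along a \emph{single} Borel-selected ray $\gamma_\xi$ does not yield a homomorphism to tail equivalence: two fellow-travelling combinatorial geodesics in a $\CAT(0)$ cube complex can cross the same cofinite set of hyperplanes in different orders (think of a ray running through a strip of squares versus its diagonal reflection), so even after deleting prefixes the label sequences need not agree. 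Nothing in your outline explains how to absorb these reorderings, and the semidirect product $\bigl(\bigoplus_{\h}\Z/2\Z\bigr)\rtimes G$ does not obviously help, since its orbit relation on $2^{\h}$ has no a priori reason to be hyperfinite.

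The paper sidesteps exactly this obstruction by working not with a single canonical ray but with the entire interval $\Geo{\xi}{x_0}$ of vertices lying on \emph{some} combinatorial geodesic from $x_0$ to $\xi$. The key technical input (Lemma~\ref{modulo.finite}) is that for adjacent $x,y$ the intervals $\Geo{\xi}{x}$ and $\Geo{\xi}{y}$ differ by a finite set; this is where the $\CAT(0)$ cube structure (convex combinatorial hyperplanes and $\ell^1$-projections) is actually used. The coding is then by the lexicographically least colour-pattern of length $n$ that occurs at \emph{infinitely many} vertices of the interval, for an edge-colouring coming from a transversal of the $G$-action on vertices. Because the interval is basepoint-independent modulo finite sets, this colour-pattern is an orbit invariant up to a tail, and a short hyperbolicity argument bounds the fibres uniformly. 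Two further ingredients you do not mention are essential: Agol's theorem is invoked to pass to a finite-index subgroup acting \emph{freely} (so the edge colouring is well defined), and a reflection-theorem argument is used to replace the analytic finite-class relation on the image by a genuine Borel one before concluding hyperfiniteness.
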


Note that if $G$ acts geometrically on $X$ and $Y$, then there is a
$G$-equivariant homeomorphism of $\partial X$ and $\partial
Y$ \cite{gromov}. Hence, the above theorem implies the following.

\begin{cor}\label{corol}
  If $G$ is a hyperbolic cubulated group, then its natural
  boundary action on $\partial G$ is hyperfinite.
\end{cor}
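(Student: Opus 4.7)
The plan is to mimic the strategy of Dougherty, Jackson and Kechris for free groups by coding boundary points via combinatorial geodesic rays in the cube complex $X$ and reducing the orbit equivalence on $\partial X$ to a tail-type equivalence on codes that is hyperfinite by direct construction. Fix a base vertex $v_0 \in X^{(0)}$. Since the $G$-action is geometric, $X$ is locally finite, so the space of combinatorial geodesic rays from $v_0$ sits as a closed subspace of the compact Polish product $(X^{(0)})^{\N}$.

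First, I would define a Borel coding $\xi \mapsto \rho_\xi$ from $\partial X$ into this space, picking $\rho_\xi$ canonically (say, lex-least with respect to a fixed enumeration of $X^{(0)}$) among all combinatorial rays from $v_0$ converging to $\xi$. That this coding is well-defined and Borel hinges on the Morse lemma applied inside the hyperbolic space $X$, which ensures that any two combinatorial rays with the same Gromov-boundary limit fellow-travel at uniformly bounded combinatorial distance $C$ depending only on the hyperbolicity constant and the cube-complex geometry.

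Next, I would define a tail-type relation $\rho \sim \rho'$ to hold iff there exist $g \in G$ and $n_1, n_2 \in \N$ such that the shifted tails $k \mapsto g \cdot \rho(n_1 + k)$ and $k \mapsto \rho'(n_2 + k)$ stay within combinatorial distance $C$ for all sufficiently large $k$. On the image of the coding, $\sim$ coincides with the pullback of the orbit equivalence: fellow-travelers converge to the same boundary point, yielding $g\xi = \eta$, and conversely $g\xi = \eta$ forces $g \cdot \rho_\xi$ and $\rho_\eta$ to fellow-travel by hyperbolicity. Writing $\sim = \bigcup_n E_n$ by bounding $|g| \le n$ and $n_1, n_2 \le n$ (and then closing under transitivity at each level), properness bounds the number of admissible group elements while the canonical choice of ray to each target boundary point bounds the tail options, so that each $E_n$ has finite classes on the image of the coding.

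The main obstacle is establishing this finite-class property for $E_n$ and verifying $\bigcup_n E_n = \sim$ while keeping the whole construction Borel. Unlike in a tree, asymptotic combinatorial rays in a CAT$(0)$ cube complex need not agree on any tail, so one must control the bounded ambiguity introduced by parallel rays through the hyperplane/pocset structure of $X$. The key geometric inputs will be the Morse lemma, the compatibility between the combinatorial and CAT$(0)$ metrics for locally finite cube complexes admitting a hyperbolic geometric action, and cocompactness, which produces only finitely many $G$-orbit types of local configurations inside the bounded fellow-traveling tube around any given ray. Executing this cleanly — and in particular producing a canonical, Borel-measurable coding whose induced equivalence is genuinely hyperfinite rather than merely countable — is the technical crux of the argument.
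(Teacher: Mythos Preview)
Your proposal targets Theorem~\ref{main} rather than the corollary itself, which follows from that theorem in one line via the $G$-equivariant homeomorphism between the Gromov boundaries of any two spaces on which $G$ acts geometrically. For Theorem~\ref{main} your filtration has a genuine gap. Once two rays fellow-travel within $C$ for all sufficiently large $k$ they share an endpoint, so on the image of your coding the relation $E_n$ collapses to ``$\exists g$ with $|g|\le n$ and $g\xi=\eta$''; the shift bounds $n_1,n_2\le n$ become irrelevant. This relation is symmetric and reflexive but not transitive, and closing it under transitivity ``at each level'', as you propose, recovers the full orbit equivalence $E$ as soon as $n$ exceeds the maximal length of a generator. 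So either your $E_n$ fail to be equivalence relations or they fail to be finite. Filtering by word length of the acting element cannot by itself produce a hyperfinite decomposition; the DJK argument for trees works precisely because tails of geodesic rays to the same boundary point literally coincide, not merely fellow-travel, and that is exactly what fails in a cube complex.

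The paper's route is different in kind. It does not select a canonical ray at all. The decisive cube-complex input is Lemma~\ref{modulo.finite}: the interval $\Geo{\eta}{x}$ (the union of \emph{all} combinatorial rays from $x$ to $\eta$) depends on $x$ only up to a finite set; this is the precise ``hyperplane/pocset control'' you gesture at but do not supply, and its proof genuinely uses convexity of combinatorial hyperplanes and projections to them. After passing to a finite-index torsion-free subgroup acting freely (via Agol's theorem), one colours directed edges $G$-invariantly and, for each $a\in\partial X$, records the set $T^a_n$ of vertices in $\Geo{a}{v_0}$ at which the lex-least length-$n$ colour-string that occurs infinitely often begins. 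Lemma~\ref{modulo.finite} forces the sets $T^a_n$ to be $G$-equivariant modulo finite error, and a thin-triangle estimate bounds, uniformly in $n$, how many $G$-translates of such a set can coincide. This produces a Borel homomorphism into $E_1$, hence hypersmoothness, hence hyperfiniteness. Your outline contains neither the modulo-finite lemma nor the passage to a free action, and without the former there is no mechanism to turn bounded fellow-travelling into an honest finite-class filtration.
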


The boundary actions of hyperbolic groups
have been studied from the perspective of their
complexity. Recall that if $\mu$ is a probability measure on
a standard Borel space $X$ and $E$ is a countable Borel
equivalence relation on $X$, then $E$ is called
\textit{$\mu$-hyperfinite} if there exists a $\mu$-conull
set $A\subseteq X$ such that $E\cap A^2$ is
hyperfinite. A Borel probability measure $\mu$ is $E$-\textit{quasi-invariant} if it is quasi-invariant with
respect to any group action inducing $E$. Kechris and Miller
\cite[Corollary 10.2]{kechris.miller} showed that if $E$ is
$\mu$-hyperfinite for all $E$-quasi-invariant Borel
measures, then $E$ is $\mu$-hyperfinite for all Borel
measures $\mu$. It is worth noting, however, that for
boundary actions of hyperbolic groups usually there is no
unique quasi-invariant measure on the boundary.

In the case of the free group, its boundary action
induces the equivalence relation which is Borel bi-reducible
with the so-called \textit{tail equivalence relation} on the
Cantor set: $x\mathrel{ E_t} y$ if
$\exists n\,\exists m\,\forall k\ x(n+k)=y(m+k)$. It follows
from the results of Connes Feldman and Weiss \cite[Corollary
13]{cfw} and Vershik \cite{vershik}
that if $G$ is the free group, then the action of $G$ on its
Gromov boundary (which is the Cantor set) is
$\mu$-hyperfinite for every Borel quasi-invariant
probability measure. Dougherty Jackson and Kechris
\cite[Corollary 8.2]{djk} showed
later that the tail equivalence relation is actually
hyperfinite. On the other hand, Adams \cite{adams} showed that for every hyperbolic group
$G$ the action on $\partial G$ is $\mu$-hyperfinite for all
Borel quasi-invariant probability measures. We do not know
how to generalize Corollary \ref{corol} to all hyperbolic groups.

Our proof uses an idea of Dougherty,  Jackson and Kechris \cite{djk}
and the main ingredient of the proof is a result that seems
to be interesting on its own right. Given an element a
hyperbolic group $G$ acting geometrically on a cube complex $C$, 
$\gamma\in\partial C$ and an element $x\in C$, define the \textit{interval}
$\Geo{\gamma}{x}$ to be the set of all vertices of the complex
which lie on a geodesic ray in the 1-skeleton of
$C$ from
$x$ to $\gamma$. We would like to emphasize here that we
consider here only the 1-skeleton of $C$ and all geodesics
we consider are the \textit{combinatorial geodesics}, i.e. those
taken in the 1-skeleton.

\begin{lem}\label{modulo.finite}
  If a hyperbolic group $G$ acts geometrically on a
  $\mathrm{CAT}(0)$ cube complex $C$ and $\gamma\in \partial C$, then
  for every $x,y\in C$ the sets $\Geo{\gamma}{x}$ and
  $\Geo{\gamma}{y}$ differ by a finite set. 
\end{lem}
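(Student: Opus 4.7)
The plan is to combine the hyperplane calculus of the CAT$(0)$ cube complex $C$ with hyperbolicity of its $1$-skeleton. For vertices $a,b\in C$, let $\h(a,b)$ denote the finite set of hyperplanes separating $a$ from $b$, so $|\h(a,b)|=d(a,b)$ and every combinatorial geodesic from $a$ to $b$ crosses each hyperplane of $\h(a,b)$ exactly once. Let $\h_\gamma(x)$ be the set of hyperplanes $H$ for which $x$ lies on one side while every combinatorial geodesic ray from $x$ to $\gamma$ is eventually on the other.

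\textbf{Step 1: hyperplane characterization of the interval.} I would first establish that $v\in\Geo{\gamma}{x}$ if and only if $\h(x,v)\subseteq \h_\gamma(x)$. The forward implication is immediate by reading off the hyperplanes crossed along the initial segment of any ray through $v$. For the converse, one verifies the identity $\h_\gamma(v)=\h_\gamma(x)\setminus \h(x,v)$ by a short case analysis on the sides of an individual hyperplane, and then concatenates any combinatorial geodesic from $x$ to $v$ with any combinatorial ray from $v$ to $\gamma$; because the hyperplane sets $\h(x,v)$ and $\h_\gamma(v)$ crossed by the two pieces are disjoint, no hyperplane is crossed twice, so the concatenation is itself a geodesic ray.

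\textbf{Step 2: reduction to a finite union.} Using the identities $\h(y,v)=\h(x,y)\triangle \h(x,v)$ and $\h_\gamma(y)=\h(x,y)\triangle \h_\gamma(x)$, a further case analysis shows that for $v\in\Geo{\gamma}{x}$ one has $v\in\Geo{\gamma}{y}$ if and only if $T\subseteq \h(x,v)$, where $T:=\h(x,y)\cap \h_\gamma(x)$. Since $|\h(x,y)|=d(x,y)<\infty$, $T$ is finite, and
\[
  \Geo{\gamma}{x}\setminus \Geo{\gamma}{y}=\bigcup_{H\in T}A_H, \qquad A_H:=\{v\in\Geo{\gamma}{x}:H\notin \h(x,v)\}.
\]
It therefore suffices to prove $A_H$ is finite for each $H\in T$.

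\textbf{Step 3: finiteness of $A_H$; the main obstacle.} Fix $H\in T\subseteq \h_\gamma(x)$ and suppose $A_H$ were infinite. By local finiteness of $C$ (from cocompactness of the action) one could choose $v_n\in A_H$ with $d(x,v_n)\to\infty$, and for each $n$ a combinatorial geodesic ray $\rho_n$ from $x$ to $\gamma$ passing through $v_n$. The condition $H\notin \h(x,v_n)$ means $v_n$ lies on the $x$-side of $H$, so $\rho_n$ has not crossed $H$ by time $d(x,v_n)$. A standard compactness/diagonal argument in the locally finite $1$-skeleton then produces a subsequential pointwise limit $\rho_\infty$, which is a combinatorial geodesic ray from $x$ that never crosses $H$. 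Hyperbolicity enters decisively here: because $G$ is hyperbolic and acts geometrically, the $1$-skeleton of $C$ is $\delta$-hyperbolic, so any two combinatorial geodesic rays from a common basepoint with the same ideal endpoint fellow-travel at uniformly bounded distance. Applied to each $\rho_n$ and a fixed reference ray from $x$ to $\gamma$, this fellow-traveling property passes to the limit and forces $\rho_\infty$ itself to converge to $\gamma$, contradicting $H\in \h_\gamma(x)$. Swapping the roles of $x$ and $y$ bounds $\Geo{\gamma}{y}\setminus \Geo{\gamma}{x}$ as well. Hyperbolicity is indispensable in this step: in $\Z^2$ with $\gamma$ a coordinate direction the analogous $A_H$ is infinite, and the lemma genuinely fails without it. A technical subtlety to handle along the way is the case of hyperplanes $H$ with $\gamma\in \partial H$, for which not every ray to $\gamma$ need cross $H$; the definition of $\h_\gamma(x)$ must be arranged so that such hyperplanes do not enter $T$, so that the contradiction step actually applies to every member of $T$.
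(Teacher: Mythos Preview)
Your limit-ray argument in Step 3 is the right engine and is essentially what the paper uses, but Steps 1--2 have a genuine gap, and the ``technical subtlety'' you flag at the end is exactly where it lies. There is an unresolved tension in the definition of $\h_\gamma(x)$. For the forward direction of Step 1 to be immediate you need the \emph{some-ray} reading (hyperplanes crossed by at least one ray to $\gamma$); under the \emph{every-ray} reading you actually wrote down, a hyperplane $H$ with $\gamma\in\partial H$ that is crossed by one ray from $x$ need not be crossed by all, so the forward implication fails. But Step 3's contradiction requires the every-ray reading: a limit ray avoiding $H$ only contradicts $H\in\h_\gamma(x)$ if membership meant that \emph{all} rays cross $H$. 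The same boundary hyperplanes break the identity $\h_\gamma(y)=\h(x,y)\triangle\h_\gamma(x)$ and your concatenation argument for the converse of Step 1: a particular ray from $v$ to $\gamma$ may re-cross some $H\in\h(x,v)$ when $\gamma\in\partial H$, so disjointness of $\h(x,v)$ and $\h_\gamma(v)$ does not by itself prevent a double crossing. Declaring that $\h_\gamma(x)$ ``must be arranged'' to exclude such $H$ is not yet a proof; you would still owe an argument that the vertices excluded from $\Geo{\gamma}{y}$ on account of such $H$ are only finitely many, and that is essentially the lemma over again.

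The paper bypasses the global characterization entirely. After reducing to adjacent $x,y$ (so only the single hyperplane $h$ dual to $\overline{xy}$ matters), it proves directly that a ray from $y$ to $\gamma$ not crossing $h$ lies in $\Geo{\gamma}{x}$, and that once such a ray crosses $h$ its tail lies in $\Geo{\gamma}{x}$. The tool missing from your approach is the $\ell^1$ nearest-point projection $\pi$ onto the convex combinatorial carrier $h_y$: projections of combinatorial geodesics to convex subcomplexes are again combinatorial geodesics. For each bad $z_i$ on a ray $\omega_i$ from $y$, the preceding lemma forces $z_i\in h_y$; a limit of initial segments gives a ray to $\gamma$ inside $h_y$, and $\delta$-hyperbolicity then shows each $\omega_i$ stays within $2\delta$ of $h_y$, so $\pi(\omega_i)$ is itself a geodesic ray from $y$ to $\gamma$ inside $h_y$ that still contains $z_i$. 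Since it never crosses $h$, this exhibits $z_i\in\Geo{\gamma}{x}$ --- a direct contradiction that works uniformly regardless of whether $\gamma\in\partial h$.
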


Theorem \ref{main} is obtained using Lemma
\ref{modulo.finite} and the following result:

\begin{thm}\label{abstractthm}
  Suppose a hyperbolic group $G$ acts freely and cocompactly
  on a locally finite graph $V$ such that for every 
  $\gamma\in \partial V$ and
  for every $x,y\in V$ the sets $\Geo{\gamma}{x}$ and
  $\Geo{\gamma}{y}$ differ by a finite set.  Then the
  action of $G$ on $\partial V$ induces a hyperfinite
  equivalence relation.
\end{thm}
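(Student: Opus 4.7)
The plan is to exhibit a Borel reduction from $E_G$ to a tail equivalence relation on sequences in a countable alphabet, and invoke the classical theorem of Dougherty, Jackson and Kechris that such tail equivalences are hyperfinite. Since hyperfiniteness is preserved under Borel reducibility among countable Borel equivalence relations (via their characterization as those reducible to $E_0$), this suffices.

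Fix a basepoint $v_0 \in V$. For $n \geq 0$ let $S_n = \{x \in V : d(v_0, x) = n\}$, which is finite by local finiteness of $V$. For each $\gamma \in \partial V$ set $\Phi(\gamma)(n) = \Geo{\gamma}{v_0} \cap S_n \in 2^{S_n}$. The map $\Phi$ is Borel from $\partial V$ into $\prod_n 2^{S_n}$, naturally identified with $A^\N$ for the countable set $A = \bigsqcup_n 2^{S_n}$. The interval hypothesis yields the key transformation property: if $\gamma' = g\gamma$ then $g \cdot \Geo{\gamma}{v_0} = \Geo{\gamma'}{gv_0}$ differs from $\Geo{\gamma'}{v_0}$ by a finite subset of $V$, and since $g$ moves each vertex by at most $d(v_0, gv_0)$ it carries $S_n$ into $\bigcup_{|m-n| \leq |g|} S_m$; hence $\Phi(\gamma)$ and $\Phi(\gamma')$ are related by a level-index shift of bounded size together with a finite perturbation at the initial levels.

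To realize this as a plain tail equivalence on sequences, I refine the encoding to absorb the $G$-translation into the shift. Using cocompactness, choose a Borel fundamental domain for the $G$-action on $V$; each vertex is labeled both by its orbit type in the finite set $V/G$ and by data recording the group element taking a chosen orbit representative to it. The refined encoding $\Phi'(\gamma) \in (A')^\N$, for an appropriate countable alphabet $A'$, has the property that for $\gamma' = g\gamma$ the sequences $\Phi'(\gamma)$ and $\Phi'(\gamma')$ coincide after a finite initial modification and a shift in the index corresponding to the change of basepoint by $g$. This is precisely the tail equivalence $E_t$ on $(A')^\N$, which is hyperfinite by Dougherty--Jackson--Kechris, completing the reduction.

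The main obstacle lies in the refinement step: turning ``$G$-translation plus finite difference'' into a pure tail equivalence. Naively, the $G$-shift on $2^V$ is an action of a possibly non-amenable group whose orbit equivalence need not be hyperfinite. What rescues us is that only the restriction to the Borel image $\Phi(\partial V)$ matters, and on this image the $G$-translation effectively becomes a shift of the linear level index. Making this precise requires combining hyperbolicity (to bound fellow-traveling of distinct geodesic rays sharing an endpoint), the interval hypothesis, and local finiteness together with the free cocompact action.
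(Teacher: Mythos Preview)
Your overall strategy---encode each boundary point by the interval $\Geo{\gamma}{v_0}$, use the interval hypothesis to see that $E_G$-related points give encodings that differ by a $G$-translate plus a finite set, and then reduce to a DJK-style tail equivalence---is the same architecture the paper follows. The problem is that you do not carry out the step you yourself flag as the ``main obstacle'': the refined encoding $\Phi'$ is never actually defined. You assert that ``the refined encoding $\Phi'(\gamma)\in (A')^{\mathbb N}$ has the property that for $\gamma'=g\gamma$ the sequences $\Phi'(\gamma)$ and $\Phi'(\gamma')$ coincide after a finite initial modification and a shift,'' but no construction of $\Phi'$ is given, and this assertion \emph{is} the content of the theorem. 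Note in particular that your observation ``$g$ carries $S_n$ into $\bigcup_{|m-n|\le |g|}S_m$'' does not produce a tail relation: spheres around $v_0$ and around $g^{-1}v_0$ do not align, so comparing $\Phi(\gamma)(n)$ with $\Phi(g\gamma)(n+k)$ for any single $k$ fails. Something genuinely more is needed to absorb the $G$-translation into a one-dimensional shift of an index, and you have only said that hyperbolicity, the interval hypothesis, and cocompactness ``together'' will do it.

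The paper fills this gap with three concrete ingredients that your sketch lacks. First, a $G$-invariant colouring of directed edges (obtained from a finite transversal of the free cocompact action) so that a combinatorial geodesic and its $g$-translate have the \emph{same} colour sequence; this is what converts $G$-translation into a shift of the linear index. Second, for each $n$ one records not the raw slice $\Geo{\gamma}{v_0}\cap S_n$ but the set $T^a_n$ of vertices where the lexicographically least infinitely-recurring colour string of length $n$ begins, and translates this set back to the transversal; a short $\delta$-thinness argument (the key lemma inside the proof) then shows that the induced equivalence on these normalized sets has classes of uniformly bounded size. Third, because the image of this map is only analytic, one invokes a reflection lemma to replace the analytic finite equivalence by a Borel one, after which the sequence of maps gives a Borel homomorphism to $E_1$ whose pullback is countable and hypersmooth, hence hyperfinite. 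None of these steps is visible in your proposal; in particular the uniform bound on class sizes (which is where $\delta$-hyperbolicity is actually used quantitatively) is the heart of the argument and is entirely missing.
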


The assumption that $G$ acts freely on $V$ means
that the action on the set of vertices of $V$ is free.

Given a fixed finite set of generators for a hyperbolic
group $G$, the group acts on its Cayley graph. For $g\in G$
and $\gamma\in\partial G$, the set
$\Geo{\gamma}{g}$ is defined as above.
The following question seems natural. \footnote{it has been
  answered recently in the negative by N. Touikan}

\begin{que}
  Suppose $G$ is a hyperbolic group with a fixed finite
  generating set and $\gamma\in\partial
  G$. Is it true that for any two group elements $g,h\in G$
  the sets $\Geo{\gamma}{g}$ and $\Geo{\gamma}{h}$ differ by a
  finite set?
\end{que}

Of course, it may turn out that the answer to the above
question depends on the choice of the generating set. Or,
more generally, one can ask the following question.

\begin{que}
 Is it true that for every hyperbolic group $G$ there exists a
 locally finite graph $V$ such that $G$ acts geometrically (or even freely
 and cocompactly) on
 $V$, and $V$ has the property that
 $\Geo{\gamma}{x}$ and $\Geo{\gamma}{y}$ have finite symmetric
 difference for every $\gamma\in\partial V$ and $x,y\in V$?
\end{que}

We should add here that the class of groups for which we
 can prove the positive answer to the above question is
 limited to groups with the Haagerup property. We do not know
 any examples of groups which have the property stated
 in the above question and do not have the Haagerup property.

\medskip

\textbf{Acknowledgement}.
We would like to thank Piotr Przytycki for inspiration and
many helpful discussions.

\section{$\mathrm{CAT}(0)$ cube complexes}\label{sec:cube-compl}

Here we give a summary of several basic properties of CAT$(0)$ cube 
complexes without proof. For more details we refer the 
reader to \cite[Chapter II.5]{bridson2011metric} and 
\cite{sageev2012cat}. 

Recall that a \emph{cube complex} is obtained by taking a
disjoint collection of unit cubes in Euclidean spaces of
various dimensions, and gluing them isometrically along
their faces. In particular, every cube complex has a
piecewise Euclidean metric.

A cube complex $X$ is \emph{uniformly locally finite} if 
there exists $D>0$ such that each vertex is contained in at 
most $D$ edges. Note that if $X$ admits a cocompact group 
action, then it is automatically uniformly locally finite. 

Now, for each vertex $v$ in a cube complex $X$, draw an 
$\varepsilon$-sphere $S_{v}$ around $v$. Note that the cubes of 
$X$ divide $S_v$ into simplices (a priori, these simplices may 
not be embedded in $S_v$, since a cube may not be embedded 
in $X$). Thus $S_v$ has the structure of a combinatorial 
cell complex which is made of various simplices glued along 
their faces. This complex is called the \emph{link} of the
vertex $v$. 

Recall that a simplicial complex $K$ is \emph{flag} if every 
complete subgraph of the 1-skeleton of $K$ is actually the 
1-skeleton of a simplex in $K$. 

\begin{defn}
	\label{defn:ccc}
        A \emph{$\CAT(0)$ cube complex} is a cube complex 
        which is simply connected and such that the link of 
        each its vertex is a flag simplicial complex. 
\end{defn}

The above is a combinatorial equivalent definition of
CAT$(0)$ property for cube complexes (for more details see
\cite[Definition II.1.2]{bridson2011metric}).

Let $X$ be a CAT$(0)$ cube complex with its piecewise 
Euclidean metric. A subset of $C\subseteq X$ is 
\textit{convex} if for any two points $x,y\in C$, any 
geodesic segment connecting $x$ and $y$ is contained in 
$C$. A \emph{convex subcomplex} of $X$ is a subcomplex which 
is also convex. 

Recall that a \textit{mid-cube} of $C=[0,1]^{n}$ is a subset
of form $f_{i}^{-1}(\{\frac{1}{2}\})$, where $f_{i}$ is one of the
coordinate functions.

\begin{defn}
  A \textit{hyperplane} $h$ in $X$ is a subset such that 
  \begin{enumerate}
  \item $h$ is connected. 
  \item For each cube $C\subseteq X$, $h\cap C$ is either 
    empty or a mid-cube of $C$. 
  \end{enumerate}
 \end{defn}

 It was proved by Sageev \cite{sageev1995ends} that for each 
 edge $e\in X$, there exists a unique hyperplane which 
 intersects $e$ in one point. This is called the hyperplane 
 \textit{dual} to the edge $e$. Actually, given an edge $e$,
 we can always build locally a piece of hyperplane that cuts 
 through $e$. In order to extend this piece to a hyperplane,
 one needs to make sure that the this piece does not run into 
 itself when one extends it. It is shown in 
 \cite{sageev1995ends} that this can never happen in a 
 CAT$(0)$ cube complex and thus such extensions exist.

 Let $X$ be a CAT$(0)$ cube complex, and let $e\subseteq X$ be
 an edge. Denote the hyperplane dual to $e$ by $h_{e}$. The
 following facts about hyperplanes are well-known
 \cite{sageev1995ends,sageev2012cat}.
\begin{enumerate}
\item The hyperplane $h_{e}$ is a convex subset of $X$ and
  $h_{e}$ with the induced cell structure from $X$ is also a
  CAT$(0)$ cube complex.
\item $X\setminus h_{e}$ has exactly two connected
  components, which are called \textit{halfspaces}.
\end{enumerate}

Two points in $X$ are \emph{separated} by a hyperplane $h$
if they are different connected components of
$X\setminus h$.


We use the following metric on the 0-skeleton $X^{(0)}$ of
$X$. Given two vertices in $X^{(0)}$, the
$\ell^{1}$-distance between them is defined to be the length
of the shortest path joining them in the 1-skeleton
$X^{(1)}$.  By \cite[Lemma 13.1]{MR2377497}, the
$\ell^{1}$-distance between any two vertices is equal to
number of hyperplanes separating them.

Given two vertices $u,v\in X$, a \textit{combinatorial
  geodesic} between them is an edge path in $X^{(1)}$
joining $u$ and $v$ which realizes the $\ell^{1}$-distance
between $u$ and $v$. Note that there may be several different
combinatorial geodesics joining $u$ and $v$.  By \cite[Lemma
13.1]{MR2377497}, an edge path $\omega\subseteq X^{(1)}$ is a
combinatorial geodesic if and only if for each pair of
different edges $e_1,e_2\subseteq\omega$, the hyperplane dual
to $e_1$ and the hyperplane dual to $e_2$ are different.

An edge path $\omega$ \textit{crosses} a hyperplane
$h\subseteq X$ if there exists an edge $e\subseteq\omega$ such
that $h$ is the dual to $e$. So, in other words, $\omega$ is
a combinatorial geodesic if and only if there does not exist
a hyperplane $h\subseteq X$ such that $\omega$ crosses $h$
more than once.

Let $Y\subseteq X$ be a convex subcomplex (with respect to the
piecewise Euclidean metric). Then, by \cite[Proposition
13.7]{MR2377497}, $Y$ is also convex with respect to the
$\ell^{1}$-metric in the following sense: for any vertices
$u,v\in Y^{(0)}$, every combinatorial geodesic joining $u$
and $v$ is contained in $Y$.

In the rest of this paper, we will always use the
$\ell^1$-metric on $X^{(0)}$ and use $d$ to denote this
metric.

Let $Y\subseteq X$ be a convex subcomplex. By \cite[Lemma
13.8]{MR2377497}, for any vertex $v\in X$, there exists a
unique vertex $u\in Y$ such that $d(u,v)=d(v,Y^{(0)})$. Thus,
we have a nearest point projection map
$\pi_Y:X^{(0)}\to Y^{(0)}$.

\begin{lem}
	\label{l:45}
        Let $Y\subseteq X$ be a convex subcomplex and $v\in
        X$. Let $\omega$ be a combinatorial geodesic from $v$ to
        a vertex in $Y$ which realizes the $\ell^1$ distance
        between $v$ and $Y^{(0)}$. Then each hyperplane dual
        to an edge in $\omega$ separates $v$ from
        $Y$. Conversely, each hyperplane which separates $v$ from
        $Y$ is dual to an edge in $\omega$.
\end{lem}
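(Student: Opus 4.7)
My plan is to translate the lemma into a statement about hyperplane separation. Using the no-backtracking characterization of combinatorial geodesics and the fact that $\ell^1$-distance equals the number of separating hyperplanes (both from \cite[Lemma 13.1]{MR2377497}), the hyperplanes dual to edges of $\omega$ are precisely those separating $v$ from $u$, where $u$ denotes the endpoint of $\omega$ in $Y$. Since $\omega$ realizes $d(v, Y^{(0)})$ and \cite[Lemma 13.8]{MR2377497} gives a unique nearest point, we have $u = \pi_Y(v)$. The lemma then reduces to showing that the set of hyperplanes separating $v$ from $u$ coincides with the set of hyperplanes separating $v$ from $Y$. The inclusion from right to left is immediate, since a hyperplane separating $v$ from all of $Y$ in particular separates $v$ from the specific vertex $u \in Y$.

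For the other inclusion I would argue by contradiction, using the median operation on $X^{(0)}$, which is well-defined because the $0$-skeleton with the $\ell^1$-metric is a median graph. Suppose some hyperplane $h$ is dual to an edge of $\omega$ but fails to separate $v$ from $Y$, and pick $y \in Y$ on the same side of $h$ as $v$. Let $m \in X^{(0)}$ be the median of $v, u, y$. Since $m$ lies on a combinatorial geodesic from $u$ to $y$ and $Y$ is $\ell^1$-convex, $m \in Y$. Moreover, a hyperplane separates $v$ from $m$ iff it separates $v$ from both $u$ and $y$; as $h$ separates $v$ from $u$ but not from $y$, it does not separate $v$ from $m$. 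Hence the hyperplanes separating $v$ from $m$ form a proper subset of those separating $v$ from $u$, giving $d(v, m) < d(v, u) = d(v, Y^{(0)})$, which contradicts $m \in Y$.

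The main obstacle is the clean use of the median characterization, which is standard in the theory of CAT(0) cube complexes but was not developed in the preceding sections of the paper. A direct alternative avoiding the median would be to concatenate $\omega$ with a combinatorial geodesic from $u$ to $y$ lying inside $Y$ (by $\ell^1$-convexity) and argue that any hyperplane crossed by both subpaths allows a surgery producing a strictly shorter edge path from $v$ to a vertex of $Y$; either route is short, but the median version seems more transparent and is the one I would adopt.
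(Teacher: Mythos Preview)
Your argument is correct. The reduction to showing that the hyperplanes separating $v$ from $u=\pi_Y(v)$ coincide with those separating $v$ from $Y$ is exactly the right reformulation, and the median argument you give for the nontrivial inclusion is clean and valid: the key facts that $m\in[u,y]\subseteq Y$ by $\ell^1$-convexity and that a hyperplane separates $v$ from $m$ iff it separates $v$ from both $u$ and $y$ are standard properties of the median structure on $X^{(0)}$.

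By way of comparison, the paper does not argue this lemma at all: its proof consists of the single sentence ``This is a special case of \cite[Proposition 13.10]{MR2377497}.'' So your write-up is genuinely more informative than what appears in the paper, at the cost of invoking the median operation, which the paper has not introduced. Your suggested alternative---concatenating $\omega$ with a geodesic in $Y$ from $u$ to $y$ and performing a surgery at a doubly-crossed hyperplane---stays entirely within the vocabulary the paper has set up (hyperplanes, convexity, the no-backtracking criterion) and would be the more in-style choice if you wanted to avoid importing the median graph viewpoint; either version is fine.
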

\begin{proof}
  This is a special case of \cite[Proposition 13.10]{MR2377497}.
\end{proof}

The following is a consequence of Lemma
\ref{l:45} and the fact that  the $\ell^{1}$-distance
between any two vertices is equal to number of hyperplanes
separating them \cite[Lemma 13.1]{MR2377497}.

\begin{cor}
\label{c:2}
Let $Y\subseteq X$ be a convex subcomplex. For every $v\in
X$ the distance $d(v,Y^{(0)})$ is the number of hyperplanes that separate $v$ from $Y$.
\end{cor}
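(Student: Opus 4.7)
The plan is to chain together the two ingredients the authors cite, namely Lemma~\ref{l:45} and \cite[Lemma 13.1]{MR2377497}, via a combinatorial geodesic realizing the distance from $v$ to $Y$. The point is that the hyperplanes dual to the edges of such a geodesic simultaneously count the length (by the ``no repeated dual hyperplane'' characterization of combinatorial geodesics) and coincide with the hyperplanes separating $v$ from $Y$ (by Lemma~\ref{l:45}).

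In detail, given $v\in X$, let $u=\pi_Y(v)$ be the nearest vertex of $Y$ to $v$, so that $d(v,Y^{(0)})=d(v,u)$. Pick any combinatorial geodesic $\omega$ from $v$ to $u$; by construction $\omega$ realizes the $\ell^1$-distance from $v$ to $Y^{(0)}$, so Lemma~\ref{l:45} applies. I would then set $\h(\omega)$ to be the set of hyperplanes dual to edges of $\omega$ and let $\h(v,Y)$ denote the set of hyperplanes separating $v$ from $Y$. Lemma~\ref{l:45} gives the two inclusions $\h(\omega)\subseteq \h(v,Y)$ and $\h(v,Y)\subseteq \h(\omega)$, so $\h(v,Y)=\h(\omega)$.

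It then remains to observe that $|\h(\omega)|$ equals the length of $\omega$. This is because along a combinatorial geodesic no hyperplane is crossed twice (the characterization of combinatorial geodesics recalled just before Lemma~\ref{l:45}), so the assignment ``edge $\mapsto$ dual hyperplane'' is a bijection between edges of $\omega$ and elements of $\h(\omega)$. Combining, $d(v,Y^{(0)})=d(v,u)=|\h(\omega)|=|\h(v,Y)|$, which is exactly the statement.

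There is no real obstacle here; the only thing to be careful about is verifying that the geodesic from $v$ to its projection $u$ is genuinely a geodesic from $v$ to $Y^{(0)}$ in the sense required by Lemma~\ref{l:45}, which follows immediately from the definition of $\pi_Y$. Once this is noted, the corollary is a short bookkeeping argument.
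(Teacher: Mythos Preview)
Your proposal is correct and is exactly the unpacking of the one-line justification the paper gives (``a consequence of Lemma~\ref{l:45} and \cite[Lemma 13.1]{MR2377497}''): pick a geodesic $\omega$ from $v$ to $\pi_Y(v)$, use Lemma~\ref{l:45} to identify the hyperplanes dual to edges of $\omega$ with those separating $v$ from $Y$, and use the no-repeated-hyperplane characterization to equate $|\h(\omega)|$ with the length of $\omega$. There is nothing to add.
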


\begin{lem}
\label{l:5}
Let $Y\subseteq X$ be a convex subcomplex and
$\pi_Y:X^{(0)}\to Y^{(0)}$ be the nearest point
projection. Given two adjacent vertices $u,v\in X$ write
$u'=\pi_Y(u)$ and $v'=\pi_Y(v)$. Suppose $h$ is the
hyperplane separating $u$ and $v$.
\begin{enumerate}
\item If $h\cap Y=\emptyset$, then $u'=v'$.
\item If $h\cap Y\neq\emptyset$, then $u'$ and $v'$ are
  adjacent vertices in $Y$. Moreover, the hyperplane
  separating $u'$ and $v'$ is exactly $h$.
\end{enumerate}
\end{lem}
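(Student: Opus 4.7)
The plan is to work entirely in terms of hyperplanes, since by Lemma \ref{l:45} the set of hyperplanes crossed by any $\ell^1$-geodesic from $v$ to $\pi_Y(v)$ is canonically the set $\mathcal{H}(v)$ of hyperplanes that separate $v$ from $Y$. In particular $d(v,Y^{(0)})=|\mathcal{H}(v)|$. The key initial observation is that because $u$ and $v$ are adjacent, the only hyperplane that could distinguish $\mathcal{H}(u)$ from $\mathcal{H}(v)$ is $h$ itself: any other hyperplane $h'$ has $u$ and $v$ on the same side, so $h'$ separates $u$ from $Y$ if and only if it separates $v$ from $Y$. I will just use this together with a case analysis on which side(s) of $h$ contain $Y$.

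For part (1), if $h\cap Y=\emptyset$ then $Y$ lies entirely on one side of $h$; say on the $v$-side. Then $h$ separates $u$ from $Y$ but not $v$ from $Y$, so $\mathcal{H}(u)=\mathcal{H}(v)\sqcup\{h\}$, and in particular $d(u,Y^{(0)})=d(v,Y^{(0)})+1$. Now concatenate the edge from $u$ to $v$ with any combinatorial geodesic from $v$ to $v'$: this gives a path from $u$ to $v'\in Y$ of length $1+d(v,Y^{(0)})=d(u,Y^{(0)})$, which is therefore a combinatorial geodesic realizing the distance from $u$ to $Y^{(0)}$. By the uniqueness clause of \cite[Lemma 13.8]{MR2377497}, $u'=v'$.

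For part (2), if $h\cap Y\neq\emptyset$ then $Y$ meets both sides of $h$, so $h$ separates neither $u$ nor $v$ from $Y$; hence $\mathcal{H}(u)=\mathcal{H}(v)$. Since $h\notin\mathcal{H}(u)$, a combinatorial geodesic from $u$ to $u'$ does not cross $h$ (by Lemma \ref{l:45}), so $u'$ lies on the same side of $h$ as $u$; symmetrically $v'$ lies on the $v$-side. Thus $h$ separates $u'$ from $v'$, giving $d(u',v')\ge 1$. To upgrade this to equality, I will show that $h$ is the \emph{only} hyperplane separating $u'$ from $v'$. Suppose $h'\neq h$ separates $u'$ from $v'$; then $u$ and $v$ lie on the same side of $h'$, say the same side as $u'$. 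Then $h'$ separates $v$ from $v'$, so by Lemma \ref{l:45} we have $h'\in\mathcal{H}(v)=\mathcal{H}(u)$, meaning $h'$ also separates $u$ from $u'$. This contradicts the fact that $u$ and $u'$ were assumed to lie on the same side of $h'$. The symmetric sub-case is identical. So the number of hyperplanes separating $u'$ from $v'$ is exactly one, namely $h$; since $\ell^1$-distance equals the number of separating hyperplanes, $u'$ and $v'$ are adjacent with $h$ as their dual hyperplane.

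The proof is essentially bookkeeping once one sees that everything is controlled by $\mathcal{H}(u)$ and $\mathcal{H}(v)$; the only mildly delicate step is ruling out a second separating hyperplane in part (2), and I expect that case analysis (on which side of $h'$ contains $u$, $v$) to be the spot most likely to be mishandled if one is not careful.
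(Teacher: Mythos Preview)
Your proof is correct and follows essentially the same approach as the paper's: both arguments use Lemma~\ref{l:45} to identify the hyperplanes crossed by a minimizing geodesic from a vertex to $Y$ with the hyperplanes separating that vertex from $Y$, then for part~(1) concatenate the edge $\overline{uv}$ with a geodesic to $v'$ and invoke uniqueness of the projection, and for part~(2) show that $h$ separates $u'$ from $v'$ and is the \emph{only} hyperplane doing so. Your packaging via the sets $\mathcal{H}(u),\mathcal{H}(v)$ is a bit cleaner than the paper's path-concatenation phrasing, but the underlying logic is the same.
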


\begin{proof}
  Suppose without loss of generality that
  $d(v,Y^{(0)})\le d(u,Y^{(0)})$. Let $\omega_v$ and
  $\omega_u$ be the combinatorial geodesics which realize
  the $\ell^1$-distance from $v$ to $Y^{(0)}$ and
  $u$ to $Y^{(0)}$ respectively. 

  Suppose first that $h\cap Y=\emptyset$. Then
  $h\cap \omega_v=\emptyset$, otherwise we would have
  $d(v,Y^{(0)})>d(u,Y^{(0)})$. Thus $h$ separates $u$ from
  $Y$. Moreover, each hyperplane dual to an edge in
  $\omega_v$ separates $u$ from $Y$. By Corollary \ref{c:2},
  we have $d(v,Y^{(0)})+1\le d(u,Y^{(0)})$. On the other
  hand, the concatenation of the edge $\overline{uv}$ with
  $\omega_v$ has length $\le d(v,Y^{(0)})+1$. Thus this
  concatenation realizes the $\ell^1$-distance from
  $u$ to $Y^{(0)}$. It follows that $u'=v'$.

  Now suppose $h\cap Y\neq\emptyset$. First, by Lemma
  \ref{l:45} we get
  $\omega_v\cap h=\omega_u\cap h=\emptyset$ because
  otherwise $h$ would be dual to some edge in $\omega_v$ or
  $\omega_u$ and thus separate $u$ or $v$ from $Y$ and hence
  be disjoint from $Y$. Let $\omega$ be
  a geodesic joining $v'$ and $u'$. Note that $\omega$ is
  contained in $Y$. The
  path obtained by concatenating $\omega_v$, $\omega$ and  $\omega_u$ must intersect $h$ because $v$
  and $u$ lie on different sides of $h$. Thus $h$ must
  intersect $\omega$ and thus
  separate $v'$ and $u'$. To see that $v'$ and $u'$ are
  adjacent, it is enough to show that $h$ is the only
  hyperplane separating $u'$ and $v'$. Note, however, that
  if $h'$ is a hyperplane separating $u'$ from $v'$, then
  $h'$ must intersect the path obtained by contatenating
  $\omega_v$, the edge from $v$ to $u$ and $\omega_u$. By
  Lemma \ref{l:45} we get
  $h'\cap\omega_v=h'\cap\omega_u=\emptyset$ as above. Thus,
  $h'$ intersects the edge from $u$ to $v$ and hence $h'=h$.

\end{proof}

The above lemma implies that we can naturally extend the nearest
point projection map $\pi_Y:X^{(0)}\to Y^{(0)}$ to
$\pi_Y:X^{(1)}\to Y^{(1)}$.  The next result follows from
from Lemma \ref{l:5}.

\begin{cor}
\label{c:1}
Let $Y\subseteq X$ be a convex subcomplex. Let
$\omega\subseteq X$ be a combinatorial geodesic. Then
$\pi_{Y}(\omega)$ is also a combinatorial geodesic.
\end{cor}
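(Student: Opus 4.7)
The plan is to analyze $\pi_Y(\omega)$ edge by edge using Lemma \ref{l:5}, and then apply the characterization of combinatorial geodesics in terms of dual hyperplanes recalled from \cite[Lemma 13.1]{MR2377497}.

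Write $\omega$ as a concatenation of edges $e_1, e_2, \ldots, e_n$ with consecutive vertices $v_0, v_1, \ldots, v_n$, and let $h_i$ denote the hyperplane dual to $e_i$. Since $\omega$ is a combinatorial geodesic, the hyperplanes $h_1, \ldots, h_n$ are pairwise distinct. For each $i$, Lemma \ref{l:5} gives a dichotomy: either $h_i \cap Y = \emptyset$, in which case $\pi_Y(v_{i-1}) = \pi_Y(v_i)$ and the edge $e_i$ collapses under projection; or $h_i \cap Y \neq \emptyset$, in which case $\pi_Y(v_{i-1})$ and $\pi_Y(v_i)$ are adjacent in $Y$ and the edge between them is dual to $h_i$.

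Consequently, after removing the degenerate steps, $\pi_Y(\omega)$ is a genuine edge path in $Y^{(1)}$ whose sequence of dual hyperplanes is exactly the subsequence $(h_i)_{h_i \cap Y \neq \emptyset}$ of the original sequence $(h_1, \ldots, h_n)$. Since the original sequence has no repetitions, neither does this subsequence. By the characterization of combinatorial geodesics as edge paths that cross each hyperplane at most once, $\pi_Y(\omega)$ is a combinatorial geodesic in $Y$ (hence also in $X$, since $Y$ is $\ell^1$-convex in $X$).

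I do not expect any real obstacle: the entire argument is a direct bookkeeping of which edges survive under $\pi_Y$, with all the geometric content already packaged into Lemma \ref{l:5} and the hyperplane characterization of geodesics.
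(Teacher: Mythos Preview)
Your argument is correct and is exactly the intended unpacking of the paper's one-line proof (``follows from Lemma~\ref{l:5}''): analyze $\pi_Y$ edge by edge via the dichotomy in Lemma~\ref{l:5}, observe that the surviving edges are dual to a subsequence of the original pairwise-distinct hyperplanes, and conclude by the hyperplane characterization of combinatorial geodesics. The only cosmetic point is that you wrote $\omega$ as a finite path, whereas later applications use geodesic rays as well; the same argument goes through verbatim in that case.
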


Note that it is possible that $\pi_Y(\omega)$ is a single point.

\section{The geodesics lemma}
Throughout this section, $X$ will be a uniformly locally
finite Gromov-hyperbolic CAT$(0)$ cube complex. Let
$\partial X$ be the boundary of $X$.

\begin{defn}
Let $x\in X$ be a vertex and let $\eta\in \partial X$.
  Define the interval:
  \[
  \Geo{x}{\eta} = \{y\in X^{(0)}:\text{$y$ lies on a
    combinatorial geodesic from $x$ to $\eta$}\}
    \]
\end{defn}

Recall that if $X$ is $\delta$-hyperbolic, then for any $x\in X$ and $\eta\in \partial X$,
any two combinatorial geodesic ray $\omega_1$ and $\omega_2$
from $x$ to $\eta$ satisfy 
$d(\omega_1(t),\omega_2(t))\leq2\delta$ for each $t\geq 0$ and
$d_{H}(\omega_1,\omega_2)\leq2\delta$. Here
$d_{H}(\omega_1,\omega_2)$ denotes the Hausdorff distance
between $\omega_1$ and $\omega_2$.


Now we will prove Lemma \ref{modulo.finite}. Note that it suffices to prove the case when $x$ and $y$ are
adjacent. Thus in the rest of this section, we will assume $x$ and
$y$ are two adjacent vertices in $X$.

\begin{lem}
\label{l:6}
Let $h$ be the hyperplane separating $x$ and $y$ and let
$\overline{y\eta}$ be a combinatorial geodesic ray from $y$
to $\eta$.
\begin{enumerate}
\item If $\overline{y\eta}$ never crosses $h$, then each
  vertex of $\overline{y\eta}$ in contained in $\Geo{x}{\eta}$.
\item If $\overline{y\eta}$ crosses $h$, let
  $z\in \overline{y\eta}$ be the first vertex after
  $\overline{y\eta}$ crosses $h$ and let
  $\overline{z\eta}\subseteq \overline{y\eta}$ be the ray
  after $z$. Pick a combinatorial geodesic segment
  $\overline{xz}$. Then $\overline{xz}$ and
  $\overline{z\eta}$ fit together to form a combinatorial
  geodesic ray. In particular, each vertex of
  $\overline{z\eta}$ is contained in $\Geo{x}{\eta}$.
\end{enumerate}
\end{lem}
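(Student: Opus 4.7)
The plan is to argue both parts by exhibiting an explicit combinatorial geodesic ray from $x$ to $\eta$ that contains the claimed vertices, and to verify it is a combinatorial geodesic via the hyperplane criterion recalled in the text: an edge path in $X^{(1)}$ is a combinatorial geodesic if and only if it crosses each hyperplane at most once. Equivalently, the $\ell^1$-distance between two vertices equals the number of hyperplanes separating them, so the hyperplanes crossed by any combinatorial geodesic between $u$ and $v$ are exactly those separating $u$ and $v$.

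For part (1), the natural candidate is the concatenation $\omega = \overline{xy}\cdot\overline{y\eta}$. The single edge $\overline{xy}$ crosses only $h$, and by assumption $\overline{y\eta}$ never crosses $h$; since $\overline{y\eta}$ is itself a combinatorial geodesic, every other hyperplane is crossed at most once along $\omega$. Hence $\omega$ is a combinatorial geodesic ray from $x$ to $\eta$, and every vertex of $\overline{y\eta}$ lies on $\omega$, so it belongs to $\Geo{x}{\eta}$.

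For part (2), let $w$ be the vertex of $\overline{y\eta}$ immediately preceding $z$, so that the edge $\overline{wz}$ is dual to $h$, and let $H$ be the set of hyperplanes crossed by the subpath $\overline{yw}$ of $\overline{y\eta}$. Note $h\notin H$, because $\overline{y\eta}$ is a combinatorial geodesic which already crosses $h$ on $\overline{wz}$. Then the hyperplanes separating $y$ from $z$ are exactly $H\cup\{h\}$, while the hyperplanes separating $x$ from $y$ are exactly $\{h\}$. Using the standard fact that the set of hyperplanes separating two vertices is the symmetric difference of the hyperplane sets along any concatenation of combinatorial geodesics through a third vertex, I conclude that the hyperplanes separating $x$ from $z$ are precisely $H$. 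Therefore any combinatorial geodesic segment $\overline{xz}$ crosses exactly the hyperplanes in $H$.

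The last step is to note that each $h'\in H$ is crossed by $\overline{y\eta}$ inside $\overline{yw}$, and since $\overline{y\eta}$ is a combinatorial geodesic it crosses $h'$ only once; hence the subray $\overline{z\eta}$ crosses no hyperplane in $H$. The concatenation $\overline{xz}\cdot\overline{z\eta}$ therefore crosses each hyperplane at most once (the two subpaths cross disjoint sets of hyperplanes, and each is individually a geodesic), so it is a combinatorial geodesic ray from $x$ to $\eta$, proving (2). The only delicate point is the symmetric-difference accounting that isolates $H$; this is the main bookkeeping to keep straight, and beyond it I anticipate no real obstacle.
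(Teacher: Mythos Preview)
Your proof is correct and follows essentially the same approach as the paper: part (1) is identical (concatenate $\overline{xy}$ with $\overline{y\eta}$ and apply the hyperplane criterion), and for part (2) both arguments verify that the concatenation $\overline{xz}\cdot\overline{z\eta}$ crosses no hyperplane twice. The only cosmetic difference is that you compute directly, via the symmetric-difference identity for separating hyperplanes, that $\overline{xz}$ crosses exactly the set $H$ (so is disjoint from what $\overline{z\eta}$ crosses), whereas the paper argues by contradiction that any hyperplane $h'$ crossed by both $\overline{xz}$ and $\overline{z\eta}$ would have to separate $x$ from $y$ and hence equal $h$, which is impossible since $\overline{z\eta}$ does not cross $h$.
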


\begin{proof}
  To see (1), let $\overline{xy}$ be the edge joining $x$
  and $y$. Then $h$ is the hyperplane dual to
  $\overline{xy}$. Since $\overline{y\eta}$ never crosses $h$,
  then each hyperplane which is dual to some edge of
  $\overline{y\eta}$ is different from $h$. Thus the
  concatenation of $\overline{xy}$ and $\overline{y\eta}$ is
  a combinatorial geodesic ray because all hyperplanes dual
  to its edges are distinct \cite[Lemma
  13.1]{MR2377497}. Thus each vertex of $\overline{y\eta}$
  in contained in $\Geo{x}{\eta}$.

  Now we prove (2). Since $\overline{y\eta}$ is a
  combinatorial geodesic ray, it follows \cite[Lemma
  13.1]{MR2377497} that $\overline{z\eta}$ does not cross
  $h$. Suppose the concatenation of $\overline{xz}$ and
  $\overline{z\eta}$ is not a combinatorial geodesic
  ray. Since $\overline{xz}$ and $\overline{z\eta}$ are
  already geodesic, the only possibility is that there exist
  edges $e_1\subseteq\overline{xz}$ and
  $e_2\subseteq\overline{z\eta}$ such that they are dual to
  the same hyperplane $h'$, again by \cite[Lemma
  13.1]{MR2377497}. Let $u_i$ and $v_i$ be endpoints of
  $e_i$ indicated in the picture below.
\[
  \begin{tikzpicture}
    \coordinate (x) at (0,0);
    \coordinate (h1) at (0,-2);
    \coordinate (y) at (0,-3);
    \coordinate (u1) at (1.5,0);
    \coordinate (e1) at (2,0);
    \coordinate (v1) at (2.5,0);
    \coordinate (z) at (4,0);
    \coordinate (h) at (4,1);
    \coordinate (u2) at (5.5,0);
    \coordinate (e2) at (6,0);
    \coordinate (v2) at (6.5,0);
    \coordinate (h2) at (8,-2);
    \coordinate (eta) at (10,0);
    \filldraw (x) circle (1.5pt) node[left] {$x$};
    \filldraw (y) circle (1.5pt) node[left] {$y$};
    \filldraw (u1) circle (1.5pt) node[below left] {$u_1$};
    \node[below] at (e1) {$e_1$};
    \filldraw (v1) circle (1.5pt) node[below right] {$v_1$};
    \filldraw (z) circle (1.5pt) node[below] {$z$};
    \node[below] at (h) {$h'$};
    \filldraw (u2) circle (1.5pt) node[below left] {$u_2$};
    \node[below] at (e2) {$e_2$};
    \filldraw (v2) circle (1.5pt) node[below right] {$v_2$};
    \node[right] at (eta) {$\eta$};
    \draw [->] (x) -- (eta);
    \draw (y) -- (z);
    \draw [very thick] (u1) -- (v1);
    \draw [very thick] (u2) -- (v2);
    \draw plot [smooth] coordinates {(h1) (e1) (h) (e2) (h2)};
  \end{tikzpicture}
  \]
  Since $\overline{xz}$ is a combinatorial geodesic, it
  crosses $h'$ only once (\cite[Lemma
  13.1]{MR2377497}). Thus the segments $\overline{xu_1}$ and
  $\overline{v_1z}$ stay in different sides of $h'$. In
  particular, $x$ and $z$ are in different sides of
  $h'$. Since $\overline{y\eta}$ is a combinatorial geodesic
  ray, it crosses $h'$ only once, thus the segment
  $\overline{yz}\cup\overline{zu_2}$ is in one side of
  $h'$. In particular, $y$ and $z$ are in the same side of
  $h'$. Thus we deduce that $x$ and $y$ are separated by
  $h'$. Since $x$ and $y$ are adjacent, there is only one
  hyperplane separating them, thus $h'=h$. This is a
  contraction since $\overline{z\eta}$ does not cross $h$.
\end{proof}

\begin{proof}[Proof of Lemma \ref{modulo.finite}]
  We assume $x$ and $y$ are adjacent. Let $h$ be the
  hyperplane separating them. We argue by contradiction and suppose
  there exists a sequence $\{z_i:i\in\mathbb{N}\}$ in
  $\Geo{y}{\eta}\setminus \Geo{x}{\eta}$ with $z_i\neq z_j$ for
  $i\neq j$. Since $X$ is uniformly locally finite, we can
  assume $d(z_i,y)\to\infty$ as $i\to \infty$. Let
  $\omega_i$ be a combinatorial geodesic segment from $y$
  to $\eta$ such that $z_i\in\omega_i$. By Lemma \ref{l:6},
  each $\omega_i$ crosses $h$. Let
  $\overline{yv_i}\subseteq\omega_i$ be the segment before
  $\omega_i$ crosses $h$, and let
  $\overline{u_i\eta}\subseteq\omega_i$ be the segment after
  $\omega_i$ crosses $h$ (see the picture below). It follows
  from Lemma \ref{l:6} (2) that
  $z_i\subseteq\overline{yv_i}$. In particular,
  $d(v_i,y)\to\infty$ as $i\to \infty$.
\[
  \begin{tikzpicture}
    \coordinate (eta) at (0,-0.5);
    \coordinate (h1) at (0,0);
    \coordinate (ui) at (2,-0.5);
    \coordinate (vi) at (2,0.5);
    \coordinate (zi) at (3,0.5);
    \coordinate (y) at (4,0.5);
    \coordinate (h2) at (4,0);
    \node[left] at (eta) {$\eta$};
    \node[left] at (h1) {$h$};
    \filldraw (ui) circle (1.5pt) node[right] {$u_i$};
    \filldraw (vi) circle (1.5pt) node[left] {$v_i$};
    \filldraw (zi) circle (1.5pt) node[above] {$z_i$};
    \filldraw (y) circle (1.5pt) node[right] {$y$};
    \draw (h1) -- (h2);
    \draw [->] (y) -- (vi) -- (ui) -- (eta);
  \end{tikzpicture}
  \]

  Recall that $h$ gives rise to two combinatorial
  hyperplanes, one containing $x$, which we denote by $h_x$,
  and one containing $y$, which we denote by $h_y$. Note
  that $v_i\in h_y$ by construction. Since $h_y$ is a
  convex subcomplex, it follows \cite[Proposition
  13.7]{MR2377497} that $\overline{yv_i}\subseteq h_y$. Since
  $X$ is uniformly locally finite (hence locally compact)
  and $d(v_i,y)\to\infty$, up to passing to a subsequence,
  we can assume the sequence of segments
  $\{\overline{yv_{i}}\}_{i=1}^{\infty}$ converges to a
  combinatorial geodesic ray $\omega$. Since
  $\overline{yv_i}\subseteq h_y$ for each $i$,
  $\omega\subseteq h_y$. Moreover, by $\delta$-hyperbolicity, the Hausdorff distance between $\omega$ and
  any of $\omega_i$ is less than $2\delta$. Thus $\omega$ is a
  combinatorial geodesic ray joining $y$ and $\eta$. Since
  $\omega$ is contained in $h_y$ we get that for every
  $i$ and every vertex $w\in\omega_i$ we have
  $d(w,h_y)\leq2\delta$.

  Let $\pi: X^{(1)}\to h^{(1)}_y$ be the nearest point
  projection from $X^{(1)}$ to the 1-skeleton of convex
  subcomplex $h_y$. Then $\pi(\omega_i)$ is a combinatorial
  geodesic by Corollary \ref{c:1}. It follows from the above
  remarks and the
  definition of $\pi$ that $d_H(\omega_i,\pi(\omega_i))\leq2\delta$

  Thus $\pi(\omega_i)$ is a combinatorial geodesic ray
  joining $y$ and $\eta$. Since
  $\overline{yv_i}\subseteq h_y$,
  $\pi(\overline{yv_i})=\overline{yv_i}$. Thus
  $\overline{yv_i}$ is contained in $\pi(\omega_i)$. In
  particular, $z_i\in \pi(\omega_i)$. Since
  $\pi(\omega_i)\subseteq h_y$, it never crosses $h$, thus Lemma
  \ref{l:6} (1) implies $z_i\in \Geo{x}{\eta}$, which is a
  contradiction.
\end{proof}

\section{Finite Borel equivalence relations}

We will use the following standard application of the second
reflection theorem \cite[Theorem 35.16]{kechris}. Below, if
$E$ is an equivalence relation of $Z$ and $A\subseteq Z$, then
$E|A$ denotes $E\cap A\times A$.

\begin{lem}\label{reflection}
  Let $Z$ be a Polish space, $A\subseteq Z$ be analytic and let $E$ be an analytic equivalence relation on $Z$
  such that there is some $n >1$ such that every
  $E|A$-class has size less than $n$.
  Then there is a Borel equivalence relation $F$ on $Z$
  with $E|A\subseteq F$ such that every $F$-class has size less than $n$.
\end{lem}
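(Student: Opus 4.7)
The plan is to apply the reflection theorem \cite[Theorem 35.16]{kechris} to a suitable $\Pi^1_1$-on-$\Sigma^1_1$ property of binary relations on $Z$. First, I would replace $E|A$ by the analytic equivalence relation
\[
E' := (E|A) \cup \Delta_Z, \qquad \Delta_Z = \{(z,z) : z \in Z\},
\]
which is an analytic equivalence relation on all of $Z$. Its classes are either singletons (for points $z \notin A$) or $E|A$-classes (for $z \in A$), so every $E'$-class has size less than $n$.

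Next, I would consider the property $\Phi$ of binary relations $R \subseteq Z \times Z$ defined by: $\Phi(R)$ holds iff the equivalence relation generated by $R$ has every class of size less than $n$. Equivalently, $\Phi(R)$ asserts that for every tuple $(x_1, \ldots, x_n)$ of pairwise distinct elements of $Z$, there exist $i \neq j$ such that $x_i$ and $x_j$ cannot be joined by a finite $(R \cup R^{-1})$-path. Since joinability by a finite $R$-path is $\Sigma^1_1$ uniformly as $R$ ranges over an analytic family of relations, its negation is $\Pi^1_1$, and the outer universal quantification over $n$-tuples of distinct points preserves $\Pi^1_1$. Hence $\Phi$ is $\Pi^1_1$-on-$\Sigma^1_1$, so the reflection theorem applied to the analytic set $E'$ with $\Phi(E')$ produces a Borel set $B \subseteq Z \times Z$ with $E' \subseteq B$ and $\Phi(B)$.

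Let $F$ be the equivalence relation generated by $B$. Then $F \supseteq E' \supseteq E|A$ and all $F$-classes have size less than $n$ by $\Phi(B)$. It remains to verify that $F$ is itself Borel. Setting $B^\ast := B \cup B^{-1} \cup \Delta_Z$, one has
\[
F = \bigcup_{k=0}^{n-1} (B^\ast)^k
\]
(iterated composition of Borel relations), which is analytic; since every vertical section of $F$ is finite, the Lusin-Novikov theorem gives that $F$ is Borel, as required.

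The main subtlety I anticipate is the clean verification that $\Phi$ is $\Pi^1_1$-on-$\Sigma^1_1$ in the precise technical sense required by the reflection theorem, in particular handling the countable existential quantifier over path lengths uniformly in an analytic family; once this is in place, the construction of $F$ and its Borelness via Lusin-Novikov are routine bookkeeping.
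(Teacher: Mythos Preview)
Your overall strategy works and is close in spirit to the paper's, but it uses a different reflection theorem and there is one genuine slip at the end.

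The paper invokes the \emph{second} reflection theorem \cite[Theorem~35.16]{kechris} on a two-variable predicate $\Phi(B,C)$ arranged so that $\Phi(B,B^c)$ says exactly ``$B$ is an equivalence relation on $Z$ with every class of size $<n$''; reflection then returns a Borel $F\supseteq G$ which is \emph{already} the desired equivalence relation. You instead use the \emph{first} reflection theorem (this is Kechris 35.10, not 35.16) on a one-variable $\Phi$, obtaining a Borel $B\supseteq E'$ which need not be an equivalence relation, and then pass to the generated $F$. What the paper's route buys is precisely the step you must now supply: that $F$ is Borel.

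That step, as you have written it, has a gap. You assert that $F=\bigcup_{k<n}(B^\ast)^k$ is analytic with finite sections and then conclude Borelness via Lusin--Novikov; but Lusin--Novikov is a theorem about Borel sets, and an analytic equivalence relation with uniformly bounded finite classes need not be Borel (on $\omega^\omega\times\{0,1\}$, declare $(x,0)\sim(x,1)$ iff $x$ lies in a fixed non-Borel analytic set). The repair is immediate: since $\Phi(B)$ holds, the \emph{Borel} set $B^\ast$ already has all vertical sections of size $<n$, so Lusin--Novikov applied to $B^\ast$ writes it as $\bigcup_i\mathrm{graph}(f_i)$ for finitely many Borel partial functions $f_i$; then each $(B^\ast)^k$ is the finite union of the graphs of the compositions $f_{i_k}\circ\cdots\circ f_{i_1}$, hence Borel, and so is $F$. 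Incidentally, the $\Pi^1_1$-on-$\Sigma^1_1$ verification you flagged as the ``main subtlety'' is in fact routine; the actual subtlety was here.
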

\begin{proof}
  Note that $G = E|A\cup\{(z,z):z\in Z\}$ is an analytic equivalence relation on $Z$
  whose classes have size less than $n$.
  Now consider $\Phi\subseteq\Pow(Z^2)\times\Pow(Z^2)$ defined as follows:
  \begin{align*}
    (B,C)\in\Phi \iff
    & \forall x\,\,\lnot x\mathrel{C}x \\
    & \land\forall(x,y)\,\lnot x\mathrel{B}y\lor\lnot y\mathrel{B}x \\
    & \land\forall(x,y,z)\,\lnot x\mathrel{B}y\lor\lnot y\mathrel{B}z\lor\lnot x\mathrel{C}z \\
    & \land\forall_{i=1}^n x_i\,(\bigvee_{i\neq j} x_i = x_j)
    \lor(\bigvee_{i\neq j}\lnot x_i \mathrel{B} x_j)
  \end{align*}
  Note that $\Phi(B,B^c)$ holds iff $B$ is an equivalence relation on $Z$
  whose classes have size less than $n$,
  so in particular we have $\Phi(G,G^c)$.
  Now $\Phi$ is $\mathbf{\Pi}^1_1$ on $\mathbf{\Pi}^1_1$,
  hereditary and continuous upward in the second variable,
  so by the second reflection theorem \cite[Theorem 35.16]{kechris},
  there is a Borel set $F\supset G$ such that $\Phi(F,F^c)$ holds,
  and we are done.
\end{proof}

\section{Proof of main theorem}

The following fact lets us reduce our problem to the case of free actions.
\begin{lem}\label{finindex}
  Every cubulated hyperbolic group has a finite index subgroup acting freely and cocompactly
  on a $\CAT(0)$-cube complex.
\end{lem}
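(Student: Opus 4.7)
The plan is to produce a finite index subgroup $H \le G$ which is torsion-free; such an $H$ will automatically act freely on the given cube complex $X$, because properness of the $G$-action implies that vertex stabilizers in $G$ are finite, so vertex stabilizers in $H$ are finite subgroups of a torsion-free group, hence trivial. Cocompactness of the $H$-action on $X$ is then automatic from $[G:H] < \infty$, using the same compact fundamental domain as for $G$ (together with finitely many translates). This reduces the lemma to producing such a torsion-free finite index subgroup.

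To produce $H$, the plan is to invoke Agol's theorem (building on Wise's machinery), which asserts that every hyperbolic group acting geometrically on a $\CAT(0)$ cube complex is virtually compact special. That is, there is a finite index subgroup $H \le G$ which is the fundamental group of a compact special cube complex in the sense of Haglund--Wise. By the Haglund--Wise embedding theorem, the fundamental group of any compact special cube complex embeds in a finitely generated right-angled Artin group, and right-angled Artin groups are torsion-free. Hence $H$ is torsion-free, which by the first paragraph completes the proof.

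The main ``obstacle'' here is not really an obstacle in the proof itself but in the background: the conclusion depends crucially on Agol's resolution of the virtually compact special conjecture for cubulated hyperbolic groups, which is a deep result. Once this is invoked as a black box, the rest of the argument is an essentially formal observation about torsion-free subgroups acting on complexes with finite vertex stabilizers, and the choice of cube complex on which $H$ acts freely and cocompactly can simply be taken to be $X$ itself (one does not need to pass to the universal cover of the special quotient).
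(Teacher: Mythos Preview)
Your proof is correct and follows essentially the same approach as the paper: invoke Agol's theorem to obtain a finite index subgroup acting specially, use the Haglund--Wise embedding into a right-angled Artin group to conclude torsion-freeness, and then observe that torsion-freeness together with properness forces the action on $X$ itself to be free. The paper's argument is slightly terser (it does not spell out the cocompactness step), but the logical structure is identical.
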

\begin{proof}
  If $G$ is a hyperbolic group acting properly and cocompactly on a $\CAT(0)$ cube complex $X$,
  then by Agol's theorem \cite[Theorem 1.1]{agol} (see also  Wise \cite{wise})
  there is a finite index subgroup $F$ acting faithfully and
  specially on $X$ (see Haglund and Wise \cite[Definition 3.4]{MR2377497} for the
  definition of special action).
  Now $F$ embeds into a right-angled Artin group which is torsion-free,
  so $F$ is torsion-free.
  Since every stabilizer is finite by properness of the action,
  it must be trivial since $F$ is torsion-free,
  and thus $F$ acts freely on $X$.
\end{proof}
 

\begin{proof}[Proof of Theorem \ref{abstractthm}]
  
Let $V$ be the set of vertices of the graph. Note that $V$
as a metric space is hyperbolic since the action of $G$ is
geometric. Below, by $\partial V$ we denote the Gromov
boundary of $V$.
Fix $v_0\in V$
and fix a total order on $V$ such that $d(v_0,v)\le
d(v_0,w)\implies v\le w$, where $d$ denotes the graph
distance on $V$.
Fix a transversal $\tilde{V}$ of the action of $G$ on $V$
(the transversal is finite since the action is cocompact).
For $v\in V$,
we denote by $\tilde{v}$ the unique element of $\tilde{V}$ in the orbit of $v$.
By a directed edge of $V$ we mean a pair $(v,v')\in V^2$ such that there is an edge from $v$ to $v'$.
We colour the directed edges of $V$ as follows.
We assign a distinct colour to every directed edge $(v,v')$ with $v\in\tilde{V}$,
and this extends uniquely (by freeness) to a $G$-invariant colouring on all directed edges.
Let $C$ be the set of colours (which is finite since $V$ is locally finite),
and let $c(v,v')$ be the colour of $(v,v')$.
Fix any total order on $C$.
This induces a lexicographical order on $C^{<\N}$ (the set
of all finite sequences of elements of $C$).

For any combinatorial geodesic $\eta\in V^{<\N}$ and $m,n\in\N$,
define:
\[
  c(\eta,m,n)
  = (c(\eta_m,\eta_{m+1}),c(\eta_{m+1},\eta_{m+2}),\ldots,c(\eta_{m+n-1},\eta_{m+n}))
  \in C^{<\N}
  \]
For every $a\in\partial V$,
define $S^a\subseteq V\times C^{<\N}$ as follows:
\begin{align*}
 S^a = \{(\eta_m,c(\eta,m,n))\in V\times C^{<\N}:
  \eta \mbox{ is a combinatorial geodesic}\\
  \mbox{ from } v_0 \mbox{ to }a \mbox{ and } m,n\in\N\}
 \end{align*}
Let $s^a_n\in C^{<\N}$ be the least string of length $n$
which appears infinitely often in $S^a$,
ie. such that there are infinitely many $v\in V$ for which $(v,s^a_n)\in S^a$.
Note that each $s^a_n$ is an initial segment of $s^a_{n+1}$.
Let $$T^a_n = \{v\in V:(v,s^a_n)\in S^a\}$$
and let $v^a_n = \min T^a_n$ (with respect to the ordering on $V$).
Note that every vertex in $T^a_n$ has an edge coloured by $s^a_1$ leaving it,
so every vertex of $T^a_n$ is in the same orbit.
Let $$k^a_n = d(v_0,v^a_n)$$ and note that $k^a_n$ is nondecreasing in $n$.

Now let $Z = \{a\in\partial V:k^a_n\not\to\infty\}$.
Then for each $a\in\partial V$,
since $k^a_n\not\to\infty$ and $V$ is discrete,
there is a finite set containing all $v^a_n$,
so there is some $v\in V$ which is in $T^a_n$ for infinitely many $n$.
Thus the geodesic class determined by the combinatorial geodesic
starting at $\tilde{v}$ (which is determined by $k^a_1$)
and following the colours of $\lim_n s^a_n\in C^\N$ is a Borel selector.
Thus $E$ is smooth on the saturation $[Z]_E$.

Now let $Y = partial V\setminus [Z]_E = \{a\in\partial X:\forall
bEa\ k^b_n\to\infty\}$.
We will show that $E$ is hyperfinite on $Y$.
For each $n\in\N$,
and define $H_n:\partial V\to 2^V$
by $$H_n(a) = g^a_n T^a_n,$$
where $g^a_n\in G$ is the unique element with $g^a_n v^a_n\in\tilde{V}$.
Let $E_n$ be the equivalence relation on $\im H_n$
which is the restriction of the shift action of $G$ on $2^V$.
We have the following lemma:
\begin{lem}
  There exists $K\in\N$ such that on $\im H_n$ the
  relation $E_n$ has equivalence classes of size at most
  $K$.
\end{lem}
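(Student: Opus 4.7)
Step 1 (freeness reduction). First I observe that all vertices of $T^a_n$ share the outgoing colour $s^a_1$ and hence lie in a single $G$-orbit, so $H_n(a) \subseteq G \cdot \tilde{u}$, where $\tilde{u} := g^a_n v^a_n \in \tilde{V}$, and $H_n(a) \cap \tilde{V} = \{\tilde{u}\}$. If $h \in G$ satisfies $h H_n(a_1) = H_n(a_2) \in \im H_n$, then $\tilde{u} \in h H_n(a_1)$ gives $h^{-1}\tilde{u} \in H_n(a_1)$, and by freeness of the $G$-action $h$ is uniquely determined by $h^{-1}\tilde{u}$. Consequently, the $E_n$-class of $H_n(a_1)$ is in bijection with
\[
S_{a_1} := \{w \in H_n(a_1) : h_w H_n(a_1) \in \im H_n\},
\]
where $h_w$ denotes the unique group element with $h_w w = \tilde{u}$. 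It therefore suffices to bound $|S_{a_1}|$ uniformly.

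Step 2 (translation to $T^{a_1}_n$). For $w \in S_{a_1}$, write $w = g^{a_1}_n v'$ with $v' \in T^{a_1}_n$, and let $a_2$ satisfy $h_w H_n(a_1) = H_n(a_2)$. Setting $\alpha := (g^{a_2}_n)^{-1} h_w g^{a_1}_n$, a short computation shows that $\alpha T^{a_1}_n = T^{a_2}_n$ and $\alpha v' = v^{a_2}_n$, the $v_0$-closest vertex of $T^{a_2}_n$. By the interval hypothesis applied to $[\alpha a_1, v_0)$ and $[\alpha a_1, \alpha v_0)$, together with the observation (which I would verify directly from the definitions) that $s^{\alpha a_1}_n = s^{a_1}_n$, the sets $T^{\alpha a_1}_n$ and $\alpha T^{a_1}_n$ differ by a finite set; hence $a_2 = \alpha a_1$, and the equality $\alpha T^{a_1}_n = T^{\alpha a_1}_n$ forces this finite symmetric difference to vanish.

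Step 3 (geometric bound, the main obstacle). My plan is to combine $\delta$-hyperbolicity, the vanishing of $T^{\alpha a_1}_n \triangle \alpha T^{a_1}_n$, and the minimality of $\alpha v'$ in $\alpha T^{a_1}_n$ to restrict $v'$ to a uniformly bounded subset of $T^{a_1}_n$. Since $T^{a_1}_n$ lies in the $2\delta$-neighbourhood of any combinatorial geodesic ray from $v_0$ to $a_1$ (by fellow travelling), the requirement that $\alpha v'$ be the $v_0$-closest vertex of $\alpha T^{a_1}_n$ while the finite symmetric difference vanishes should force $\alpha^{-1}v_0$ to lie in a bounded neighbourhood of $v_0$, and hence confine $v'$ to a bounded neighbourhood of $v^{a_1}_n$. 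Local finiteness of $V$ will then yield the required bound on $|S_{a_1}|$. The main technical obstacle is controlling, uniformly in $n$ and $a_1$, the geometric spread of the finite symmetric difference $T^{\alpha a_1}_n \triangle \alpha T^{a_1}_n$; the hypothesis only provides finiteness as a set, and a careful quantitative refinement of the fellow-travelling arguments underlying the interval hypothesis will be needed to translate this into the required uniform geometric constraint on $\alpha$.
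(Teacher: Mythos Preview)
Your Step~1 is correct and matches the paper's setup: reducing the size of the $E_n$-class to counting the elements $w=h_w^{-1}\tilde u\in H_n(a_1)$ (equivalently, the $v'\in T^{a_1}_n$) that can arise is exactly the right move, and the identification $\tilde u=g^{a_1}_n v^{a_1}_n=g^{a_2}_n v^{a_2}_n$ is the same observation the paper makes.

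Where you diverge is in Steps~2--3, and there is a genuine gap. You try to route the argument through the interval hypothesis to compare $T^{\alpha a_1}_n$ with $\alpha T^{a_1}_n$, and you correctly flag that this only yields a finite symmetric difference with no uniform control, which is precisely what you need. In fact the detour is unnecessary: the identity $a_2=\alpha a_1$ follows immediately (both $T^{a_2}_n$ and $\alpha T^{a_1}_n$ are infinite and equal, hence accumulate on the same boundary point), and the interval hypothesis plays no role in this lemma at all. In the paper it is used only \emph{after} this lemma, to show that $H_n$ is eventually a homomorphism into $E_n$.

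The paper closes your Step~3 by a direct $\delta$-hyperbolicity estimate yielding the explicit bound $d(\tilde u,h_w\tilde u)\le 6\delta$ (equivalently $d(v',v^{a_1}_n)\le 6\delta$), with no appeal to intervals. Set $c:=\alpha a_1\in\partial V$ and pick a geodesic $\eta$ from $h_w g^{a_1}_n v_0$ to $c$ through $h_w\tilde u$, and a geodesic $\gamma$ from $g^{a_2}_n v_0$ to $c$ through $h_w\tilde u$. Since $\tilde u$ lies in both $h_w g^{a_1}_n T^{a_1}_n$ and $g^{a_2}_n T^{a_2}_n$, it is $2\delta$-close to points $\eta_{m_2}$ and $\gamma_{m_4}$ on these geodesics; the minimality of $v^{a_1}_n$ and of $v^{a_2}_n$ forces $m_2$ to occur \emph{after} $h_w\tilde u$ along $\eta$ and $m_4$ to occur \emph{before} $h_w\tilde u$ along $\gamma$. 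Fellow-travelling of $\eta$ and $\gamma$ past their common point $h_w\tilde u$ then gives a point $\gamma_{m_5}$ (after $h_w\tilde u$) with $d(\eta_{m_2},\gamma_{m_5})\le 2\delta$. A triangle-inequality computation along $\gamma$ (using $d(\gamma_{m_4},\gamma_{m_5})=d(\gamma_{m_4},h_w\tilde u)+d(h_w\tilde u,\gamma_{m_5})$) yields $d(\tilde u,h_w\tilde u)\le 6\delta$. Uniform local finiteness then bounds the class size by $K=\max_{v\in\tilde V}\lvert\{g:d(v,gv)\le 6\delta\}\rvert$. This replaces your Step~3 entirely and avoids the uniformity obstacle you identified.
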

\begin{proof}
  Let $a,b\in\partial V$ and suppose $g\in G$ is such that $gH_n(a) = H_n(b)$,
  i.e. $gg^a_n T^a_n = g^b_n T^b_n$.
  Since the vertices in both sets are in the same orbit,
  $g^a_n v^a_n$ and $g^b_n v^b_n$ are elements of $\tilde{V}$
  which are in the same orbit,
  so they are equal,
  say to some $v\in\tilde{V}$.
  It suffices to show that $d(v,gv)\le 6\delta$,
  since then we can take choose any $K\in\omega$
  larger than $\max_{v\in\tilde{V}}|\{g:d(v,gv)\le 6\delta\}|$.
\[
  \begin{tikzpicture}
    \coordinate (a) at (0,-3);
    \coordinate (b) at (0,3);
    \coordinate (m4) at (1,0.3);
    \coordinate (v) at (2,1);
    \coordinate (gv) at (2,-1.3);
    \coordinate (m2) at (4,0);
    \coordinate (m5) at (4,-2);
    \coordinate (eta) at (7,1);
    \coordinate (gamma) at (7,-2.2);
    \coordinate (c) at (8,0);
    \filldraw (a) circle (1.5pt) node[right] {$gg^a_n v_0$};
    \filldraw (b) circle (1.5pt) node[right] {$g^b_n v_0$};
    \filldraw (m4) circle (1.5pt) node[left] {$\gamma_{m_4}$};
    \filldraw (v) circle (1.5pt) node[above right] {$v$};
    \filldraw (gv) circle (1.5pt) node[yshift=-0.4cm] {$gv$};
    \filldraw (m2) circle (1.5pt) node[below right] {$\eta_{m_2}$};
    \filldraw (m5) circle (1.5pt) node[above right] {$\gamma_{m_5}$};
    \draw (eta) node[right] {$\eta$};
    \draw (gamma) node[right] {$\gamma$};
    \draw [->] plot [smooth] coordinates {(a) (gv) (m2) (eta)};
    \draw [->] plot [smooth] coordinates {(b) (m4) (gv) (m5) (gamma)};
    \draw [dashed] (m4) -- (v) -- (m2) -- (m5);
  \end{tikzpicture}
\]

  Note that since $T^a_n$ and $T^b_n$ are infinite,
  we have that $gg^a_n a = g^b_n b$,
  which we will call $c\in\partial X$.
  Let $\eta$ be a geodesic from $gg^a_nv_0$ to $c$ with $\eta_{m_1} = gv$.
  Now $v\in gg^a_n T^a_n$,
  so there is some $m_2$ with $d(v,\eta_{m_2})\le 2\delta$.
  Note that by choice of $v^a_n$,
  we have $m_2\ge m_1$.
  Now let $\gamma$ be a geodesic from $g^b_n\hat{x}$ to $c$ with $\gamma_{m_3} = gv$.
  By the choice of $v^b_n$,
  there is some $m_4\le m_3$ such that $d(v,\gamma_{m_4})\le 2\delta$.
  Also $\eta$ and $\gamma$ are $2\delta$-close after they go through $gv$,
  so since $m_2\ge m_1$,
  there is some $m_5\ge m_3$ such that $d(\eta_{m_2},\gamma_{m_5})\le 2\delta$.
  Thus
  \begin{align*}
    2d(v,gv)
    & \le d(v,\gamma_{m_4}) + d(\gamma_{m_4},gv) +
      d(v,\eta_{m_2}) + d(\eta_{m_2},\gamma_{m_5})+ d(\gamma_{m_5},gv) \\
    & = d(\gamma_{m_4},\gamma_{m_5}) + d(v,\gamma_{m_4}) +
      d(v,\eta_{m_2}) + d(\eta_{m_2},\gamma_{m_5}) \\
    & \le 2(d(v,\gamma_{m_4}) + d(v,\eta_{m_2}) +d(\eta_{m_2},\gamma_{m_5})) \\
    & \le 2(6\delta),
  \end{align*}
where the first equality follows from the fact that $\gamma$
is a geodesic.
\end{proof}
Now $\im H_n$ is analytic,
so by Lemma \ref{reflection},
there is a Borel equivalence relation $E'_n$ on $2^V$ containing $E_n$
whose classes are of size at most $K$.
Let $f_n:2^V\to 2^\N$ be a reduction for $E'_n\le_B\id_{2^\N}$,
and define $f:\partial V\to (2^\N)^\N$ by $f(a) =
(f_n(H_n(a)):n\in\N)$. Write $E'$ for the pullback of $E_1$ via
$f$. Note that since each $E_n'$ is finite, the relation
$E'$ is countable. As $E'$ is clearly hypersmooth, we get
that $E'$ is hyperfinite by \cite[Theorem 8.1.5]{gao}. Now, $f$ is a homomorphism
from $E$ to $E_1$. Indeed, if $a,b\in\partial V$ with $aEb$,
then by Lemma \ref{modulo.finite},
there is $N\in\N$ such that $H_n(a) E_n H_n(b)$ for $n\ge N$,
and thus $f(x)E_1 f(y)$.
Thus, $E\subseteq E'$ is a subrelation of a hyperfinite one,
and hence it hyperfinite as well.
\end{proof}

\begin{proof}[Proof of Theorem \ref{main}]
 
Let $G$ be a cubulated $\delta$-hyperbolic group.
Since hyperfiniteness passes to finite-index extensions
\cite[Proposition 1.3]{jkl},
by Lemma \ref{finindex}, we can assume that $G$ acts freely
and cocompactly on a $\CAT(0)$ cube complex $X$. 
Let $V = X^{(0)}$ be the set of vertices of $X$.  Now the
statement follows from Theorem \ref{abstractthm}.

\end{proof}

\bibliographystyle{alpha}
\bibliography{1}

\begin{thebibliography}{CFW81}

\bibitem[Ada94]{adams}
S.~Adams.
\newblock Boundary amenability for word hyperbolic groups and an application to
  smooth dynamics of simple groups.
\newblock {\em Topology}, 33(4):765--783, 1994.

\bibitem[Ago13]{agol}
Ian Agol.
\newblock The virtual {H}aken conjecture.
\newblock {\em Doc. Math.}, 18:1045--1087, 2013.
\newblock With an appendix by Agol, Daniel Groves, and Jason Manning.

\bibitem[BH11]{bridson2011metric}
Martin~R. Bridson and Andr{\'e} Haefliger.
\newblock {\em Metric spaces of non-positive curvature}, volume 319.
\newblock Springer Science \& Business Media, 2011.

\bibitem[BW12]{bergeron.wise}
Nicolas Bergeron and Daniel~T. Wise.
\newblock A boundary criterion for cubulation.
\newblock {\em Amer. J. Math.}, 134(3):843--859, 2012.

\bibitem[CFW81]{cfw}
A.~Connes, J.~Feldman, and B.~Weiss.
\newblock An amenable equivalence relation is generated by a single
  transformation.
\newblock {\em Ergodic Theory Dynamical Systems}, 1(4):431--450 (1982), 1981.

\bibitem[CM05]{caprace.muhlherr}
Pierre-Emmanuel Caprace and Bernhard M\"uhlherr.
\newblock Reflection triangles in {C}oxeter groups and biautomaticity.
\newblock {\em J. Group Theory}, 8(4):467--489, 2005.

\bibitem[CM17]{conley.miller}
C.~Conley and B.~Miller.
\newblock Measure reducibility of countable borel equivalence relations.
\newblock {\em Ann. Math.}, 185(2):347--402, 2017.

\bibitem[DJK94]{djk}
R.~Dougherty, S.~Jackson, and A.~S. Kechris.
\newblock The structure of hyperfinite {B}orel equivalence relations.
\newblock {\em Trans. Amer. Math. Soc.}, 341(1):193--225, 1994.

\bibitem[FM77]{feldman.moore}
Jacob Feldman and Calvin~C. Moore.
\newblock Ergodic equivalence relations, cohomology, and von {N}eumann
  algebras. {I}.
\newblock {\em Trans. Amer. Math. Soc.}, 234(2):289--324, 1977.

\bibitem[Gao09]{gao}
Su~Gao.
\newblock {\em Invariant descriptive set theory}, volume 293 of {\em Pure and
  Applied Mathematics (Boca Raton)}.
\newblock CRC Press, Boca Raton, FL, 2009.

\bibitem[GJ15]{gj}
Su~Gao and Steve Jackson.
\newblock Countable abelian group actions and hyperfinite equivalence
  relations.
\newblock {\em Invent. Math.}, 201(1):309--383, 2015.

\bibitem[Gro87]{gromov}
M.~Gromov.
\newblock Hyperbolic groups.
\newblock In {\em Essays in group theory}, volume~8 of {\em Math. Sci. Res.
  Inst. Publ.}, pages 75--263. Springer, New York, 1987.

\bibitem[HKL90]{hkl}
L.~A. Harrington, A.~S. Kechris, and A.~Louveau.
\newblock A {G}limm-{E}ffros dichotomy for {B}orel equivalence relations.
\newblock {\em J. Amer. Math. Soc.}, 3(4):903--928, 1990.

\bibitem[HW08]{MR2377497}
Fr{\'e}d{\'e}ric Haglund and Daniel~T. Wise.
\newblock Special cube complexes.
\newblock {\em Geom. Funct. Anal.}, 17(5):1551--1620, 2008.

\bibitem[HW12]{haglund.wise}
Fr\'ed\'eric Haglund and Daniel~T. Wise.
\newblock A combination theorem for special cube complexes.
\newblock {\em Ann. of Math. (2)}, 176(3):1427--1482, 2012.

\bibitem[HW15]{hagen.wise.gen}
Mark~F. Hagen and Daniel~T. Wise.
\newblock Cubulating hyperbolic free-by-cyclic groups: the general case.
\newblock {\em Geom. Funct. Anal.}, 25(1):134--179, 2015.

\bibitem[HW16]{hagen.wise.irr}
Mark~F. Hagen and Daniel~T. Wise.
\newblock Cubulating hyperbolic free-by-cyclic groups: the irreducible case.
\newblock {\em Duke Math. J.}, 165(9):1753--1813, 2016.

\bibitem[JKL02]{jkl}
S.~Jackson, A.~S. Kechris, and A.~Louveau.
\newblock Countable {B}orel equivalence relations.
\newblock {\em J. Math. Log.}, 2(1):1--80, 2002.

\bibitem[KB02]{kapovich}
Ilya Kapovich and Nadia Benakli.
\newblock Boundaries of hyperbolic groups.
\newblock In {\em Combinatorial and geometric group theory ({N}ew {Y}ork,
  2000/{H}oboken, {NJ}, 2001)}, volume 296 of {\em Contemp. Math.}, pages
  39--93. Amer. Math. Soc., Providence, RI, 2002.

\bibitem[Kec95]{kechris}
Alexander~S. Kechris.
\newblock {\em Classical descriptive set theory}, volume 156 of {\em Graduate
  Texts in Mathematics}.
\newblock Springer-Verlag, New York, 1995.

\bibitem[KM04]{kechris.miller}
Alexander~S. Kechris and Benjamin~D. Miller.
\newblock {\em Topics in orbit equivalence}, volume 1852 of {\em Lecture Notes
  in Mathematics}.
\newblock Springer-Verlag, Berlin, 2004.

\bibitem[KM12]{kahn.markovic}
Jeremy Kahn and Vladimir Markovic.
\newblock Immersing almost geodesic surfaces in a closed hyperbolic three
  manifold.
\newblock {\em Ann. of Math. (2)}, 175(3):1127--1190, 2012.

\bibitem[NR97]{niblo.reeves}
Graham Niblo and Lawrence Reeves.
\newblock Groups acting on {${\rm CAT}(0)$} cube complexes.
\newblock {\em Geom. Topol.}, 1:approx. 7~pp.\, 1997.

\bibitem[OW11]{ollivier.wise}
Yann Ollivier and Daniel~T. Wise.
\newblock Cubulating random groups at density less than {$1/6$}.
\newblock {\em Trans. Amer. Math. Soc.}, 363(9):4701--4733, 2011.

\bibitem[Sag95]{sageev1995ends}
Michah Sageev.
\newblock Ends of group pairs and non-positively curved cube complexes.
\newblock {\em Proceedings of the London Mathematical Society}, 3(3):585--617,
  1995.

\bibitem[Sag12]{sageev2012cat}
Michah Sageev.
\newblock {CAT}(0) cube complexes and groups.
\newblock {\em PCMI Lecture Notes, to appear}, 2012.

\bibitem[Ver78]{vershik}
A.~M. Vershik.
\newblock The action of {${\rm PSL}(2, {\bf Z})$} in {${\bf R}^{1}$} is
  approximable.
\newblock {\em Uspehi Mat. Nauk}, 33(1(199)):209--210, 1978.

\bibitem[Wis04]{wise.cub}
D.~T. Wise.
\newblock Cubulating small cancellation groups.
\newblock {\em Geom. Funct. Anal.}, 14(1):150--214, 2004.

\bibitem[Wis17]{wise}
D.~Wise.
\newblock {\em The Structure of Groups with a Quasiconvex Hierarchy}.
\newblock book in preparation, 2017.

\end{thebibliography}

\end{document}